\documentclass[11pt]{article}
\usepackage{graphicx}
\usepackage{amsmath}
\usepackage{amscd}
\usepackage{amssymb}
\usepackage{enumerate}
\usepackage{indentfirst}
\usepackage{latexsym}
\usepackage{multicol}
\textwidth 18 cm \hoffset - 23 mm
\usepackage{color}

\hyphenpenalty=5000
\tolerance=1000

\newenvironment{proof}{\textit{Proof. }}{\hfill$\Box$}
\newtheorem{Theorem}{Theorem}[section]

\newtheorem{Lemma}{Lemma}[section]
\newtheorem{Corollary}{Corollary}[section]
\newtheorem{Remark}{Remark}[section]
\newtheorem{Definition}{Definition}[section]

\newcommand{\vcg}[1]{{\pmb #1}}

\newcommand{\vr}{\varrho}
\newcommand{\vre}{\vr_\ep}

\newcommand{\vte}{\vt_\ep}
\newcommand{\vue}{\vu_\ep}

\newcommand{\vt}{\vartheta}
\newcommand{\vu}{\vc{u}}
\newcommand{\vf}{\vc{f}}

\newcommand{\vc}[1]{{\bf #1}}

\newcommand{\Div}{{\rm div}_x}
\newcommand{\dive}{{\rm div}_x}

\newcommand{\tn}[1]{\mbox {\F #1}}
\newcommand{\dx}{\,{\rm d} {x}}

\newcommand{\intO}[1]{\int_{\Omega} #1 \dx}

\newcommand{\vv}{\vc{v}}

\newcommand{\ep}{\varepsilon}
\newcommand{\Om}{\Omega}
\newcommand{\Ome}{\Omega_\varepsilon}
\def\SSS{\tn{S}}
\def\supp{\rm supp}
\font\F=msbm10 scaled 1100
\font\FF=msbm10 scaled 750
\font\FFF=msbm10 scaled 900
\newcommand{\RR}{\mbox{\FF R}}

\def\e{\varepsilon}
\def\d{\partial}
\def\a{\alpha}
\renewcommand\d{\partial}
\renewcommand\a{\alpha}

\newcommand\R{\mathbb R}

\def\g{\gamma}
\def\de{\delta}

\def\vp{\varphi}
\def\e{\varepsilon}

\newcommand{\N}{{\mathbb N}}
\newcommand{\Z}{{\mathbb Z}}

\numberwithin{equation}{section}



\newcommand\bpm{\begin{pmatrix}}
\newcommand\epm{\end{pmatrix}}
\newcommand\be{\begin{equation}}
\newcommand\ee{\end{equation}}
\newcommand\ba{\begin{equation}\begin{aligned}}
\newcommand\ea{\end{aligned}\end{equation}}
\newcommand\nn{\nonumber}


\date{}



\begin{document}


\title{Homogenization of stationary Navier--Stokes--Fourier system in domains with tiny holes}

\author{Yong Lu\thanks{Nanjing University, Department of Mathematics, 22 Hankou Road, Gulou District, 210093 Nanjing, China. Email: luyong@nju.edu.cn. Yong Lu acknowledges the support of the Recruitment Program of Global Experts of China. This work is partially supported by project ANR JCJC BORDS funded by l'ANR of France.} \and Milan Pokorn\'y \thanks{Charles University, Faculty of Mathematics and Physics, Sokolovsk\' a 83, 186 75 Praha 8, Czech Republic.  Email: pokorny@karlin.mff.cuni.cz. Milan Pokorn\'y acknowledges the support of the project 19-04243S of the Czech Science Foundation. The paper was prepared during his stay at Nanjing University and he acknowledges also this support.}
}

\maketitle

\begin{abstract}
We study the homogenization of stationary compressible Navier--Stokes--Fourier system in a bounded three dimensional domain perforated with a large number of very tiny holes. Under suitable assumptions imposed on the smallness and distribution of the holes, we show that the homogenized limit system remains the same in the domain without holes. 

\end{abstract}

{\bf Keywords:} Homogenization, domain with holes, stationary compressible Navier--Stokes--Fourier system.

{\bf MSC (2000):} 35B27, 35Q35, 76N10.


\section{Introduction}
\label{s_i}

Homogenization in fluid mechanics gives rise to system of partial differential equations considered on physical domains perforated by a large number of tiny holes (obstacles). The main concern is the asymptotic behavior of the fluid flows when the size of the holes goes to zero and the number of the holes goes to infinity simultaneously.  The ratio between the diameter and mutual distance of these holes plays a crucial role. Mathematically, the goal is to describe the limit behavior of the solutions to the partial differential equations used to describe the fluid flows.  With an increasing number of holes, the fluid flow approaches an effective state governed by certain {\em homogenized} equations which are defined in homogeneous domains---domains without holes.


 For Stokes and stationary incompressible Navier--Stokes equations, Allaire \cite{ALL-NS1,ALL-NS2} (see also earlier results by Tartar \cite{Tartar1}) gave a systematic study for different sizes of holes. We recall Allaire's result in more details for domains in three dimensions. Consider a family of holes of diameter $O(\e^{\alpha})$, where $\e$ is their mutual distance.  Allaire showed that when $1\leq \alpha<3$ corresponding to the case of large holes, the limit fluid behavior is governed by the classical Darcy's law;  when $\alpha>3$ corresponding to the case of tiny holes, the equations do not change in the homogenization process and the limit problem is determined by the same system of Stokes or Navier--Stokes equations; when $\alpha=3$ corresponding to the case of critical size of holes, in the limit there yields the Brinkman's law---a damping term is added to the original system, which looks like a combination of the original Stokes or Navier--Stokes equations and the Darcy's law. Related results for the evolutionary (time-dependent) incompressible Navier--Stokes system were obtained by Mikeli\'{c} \cite{Mik} and, more recently, by Feireisl, Namlyeyeva and Ne\v{c}asov\'a \cite{FeNaNe}. We note that the holes are assumed to be periodically distributed in Allaire's results, while in \cite{FeNaNe}  more general distribution of holes was considered.

For the homogenization of compressible fluids, even under periodic setting of the distribution of holes, there are no systematic results as in the incompressible case.  The earlier results mainly focus on the specific case $\alpha=1$, meaning that the size of holes is proportional to their mutual distance. Masmoudi \cite{Mas-Hom} identified rigorously the porous medium equation and Darcy's law as a homogenization limit for the evolutionary barotropic compressible Navier--Stokes system in the case where the diameter of the holes is comparable to their mutual distance. Similar results for the full Navier--Stokes--Fourier system were obtained in \cite{FNT-Hom}.

 When $\a >1$, the perforated domain has three scales and the homogenization problem becomes quite different in compressible case. Unlike the incompressible case where one works only in $L^2$ framework, one needs to work in general $L^p$ framework for compressible case. We refer to \cite{Lu-Stokes} for more explanations.  For the case with large holes ($1<\a<3$) and the case with critical size of holes ($\a=3$), there are basically no results. While for the case with small holes ($\a>3$), the first author and his collaborators proved similar results as the incompressible setting and showed that the motion is not affected by the obstacles and the limit problem coincides with the original one: in \cite{FL1} and \cite{DFL} for stationary  compressible (isentropic) Navier--Stokes system,  in \cite{Lu-Sch} for evolutionary compressible (isentropic)  Navier--Stokes system.
  
  While, according to the authors' knowledge, there is no result in the homogenization of full compressible Navier--Stokes--Fourier system when $\a \neq 1.$ In this paper, we are working in this direction and focus on the case of small holes $\a >3$. The main new difficulties lie in obtaining uniform estimates for the temperatures and building a compatible extension of the temperatures.  Based on an idea of \cite{CoDo_88} which goes back to \cite{CiSJP_79}, we construct an extension operator which is bounded from $W^{1,2}(\Om_\e)$ to $W^{1,2}(\Om)$, and is bounded from $L^r(\Ome)$ to $L^r(\Om)$ for all $r\in [1,\infty]$. Moreover, it preserves the value in $\Ome$ and the non-negativity property of the temperature. By employing this extension operator, we proved the uniform $L^{3m}(\Om_\e)$ bound for the family of temperatures as $\e\to 0$. 
 
In the sequel, we use $C$ to denote a positive constant independent of $\e$, for which the value may differ from line to line. 

\section{Problem formulation, main results}
\label{s_p}

\subsection{Perforated domain}
\label{sub_p}

We study the steady compressible Navier--Stokes--Fourier system in a domain perforated with many tiny holes. Let $\ep>0$ be a small number which is used to measure the mutual distance between the holes. We assume that our domain
\be\label{domain}
\Omega_\ep = \Omega \setminus \bigcup_{n=1}^{N(\ep)} \overline T_{n,\ep},
\ee
where $\Omega \subset \R^3$ is a bounded $C^2$-domain and
$
\{T_{n,\ep}\}_{n=1}^{N(\ep)}
$ are $C^2$-domains of the diameter comparable to $\ep^\alpha$ for some $\alpha \geq 1$ such that there exist $\delta_0$, $\delta_1$ and $\delta_2$ positive for which
\be\label{ass-holes}
T_{n,\ep} = x_{\ep,n} + \ep^\alpha T_{n,1}^0 \subset B_{\delta_0\ep^\alpha}(x_{n,\ep}) \subset B_{2\delta_0\ep^\alpha}(x_{n,\ep})\subset B_{\delta_1\ep}(x_{n,\ep}) \subset B_{\delta_2\ep}(x_{n,\ep}) \subset \Omega.
\ee
We assume that the balls $B_{\delta_2\ep}(x_{n,\ep})$ centred at $x_{\ep,n}$ with diameter $\delta_2 \ep$ are pairwise disjoint and we assume that the domains $\{T_{n,1}^0\}_{n=1}^{N(\ep)}$ are uniformly $C^2$-domains.  The former in fact gives an upper limit on the number of the holes as $N(\ep) \sim  \ep^{-3}$. Note, however, that we do not assume any periodicity for the distribution of the holes, just certain uniform behavior expressed above.

\subsection{The model}
\label{sub_m}

We consider the steady compressible Navier--Stokes--Fourier system which describes the steady flow of compressible heat conducting Newtonian fluid in perforated domain $\Omega_{\e}$ given by \eqref{domain} and \eqref{ass-holes}. The purpose is to study the homogenization of the system as $\e\to 0.$ The system reads
\begin{equation} \label{bal_mass}
\Div (\vr\vu) = 0,
\end{equation}
\begin{equation} \label{bal_mom}
\Div (\vr\vu\otimes \vu) + \nabla p(\vr,\vt) -\Div \tn{S}(\vt,\nabla \vu) =  \vr \vc{f},
\end{equation}
\begin{equation} \label{bal_en}
\Div \big(\vr E\vu + p\vu -\tn{S}(\vt,\nabla \vu)\vu+ \vc{q}\big) = \vr \vc{f}\cdot\vc{u}.
\end{equation}

We complete the system by the boundary conditions on $\partial \Omega_\ep$
\begin{equation} \label{bc_vel}
\vc{u} = \vc{0},
\end{equation}
\begin{equation} \label{bc_temp}
\vc{q} \cdot \vc{n} + L(\vt-\vt_0) = 0
\end{equation}
and by prescribing the total mass
\begin{equation} \label{giv_mass}
\int_{\Omega_\ep} \vr \dx = M_\e >0.
\end{equation}

The unknown quantities are the density $\vr$: $\Omega_\ep \to \R_{\geq 0}$, the velocity $\vu$: $\Omega_\ep \to \R^3$ and the temperature $\vt$: $\Omega \to \R_+$. We are not able to conclude that the density is positive, while we can ensure that the temperature is positive, at least for the weak solutions presented below.

Furthermore, we have to specify the constitutive relations in the equations above. We first assume that the pressure
\begin{equation} \label{press}
p(\vr,\vt) = \vr^\gamma + \vr \vt.
\end{equation}
Here we require $\gamma >2$.  Note that we could also consider more general  pressure forms (as, e.g., in \cite{NoPo_JDE}). However, our main concern is the homogenization for the system, so we will not work much on the direction of weakening the assumptions of the pressure term. It would, moreover, technically complicate the paper.
  Next, the stress tensor corresponds to the compressible Newtonian fluid 
\begin{equation} \label{stress}
\tn{S}(\vt,\nabla \vu) = \mu(\vt)\Big(\nabla \vu + \nabla^{\rm T} \vu -\frac 23 \Div\vu \,\tn{I}\Big) + \nu(\vt) \Div\vu \,\tn{I},
\end{equation}
where the viscosity coefficients are continuous functions of the temperature on $\R_+$, the shear viscosity $\mu(\cdot)$ is moreover globally Lipschitz continuous, and
\begin{equation} \label{visc}
C_1(1+\vt) \leq \mu(\vt) \leq C_2(1+\vt), \qquad 0\leq \nu(\vt) \leq C_2(1+\vt).
\end{equation}
The heat flux is given by the Fourier law
\begin{equation} \label{heat}
\vc{q}(\vt,\nabla \vt)= -\kappa(\vt) \nabla \vt,
\end{equation}
where the heat conductivity is assumed to satisfy
\begin{equation} \label{ht_cond}
C_3(1+\vt^m) \leq \kappa(\vt) \leq C_4(1+\vt^m)
\end{equation}
for some positive $m$. In our case we require at least $m>2$. The  total energy is given as
$$
E = e + \frac 12 |\vu|^2,
$$
and the specific internal energy $e$ fulfils the Gibbs relation
\begin{equation}\label{Gibbs}
\frac{1}{\vt}\Big(D e + p(\vr,\vt)D\Big(\frac{1}{\vr}\Big)\Big) = Ds(\vr,\vt)
\end{equation}
which leads to
\begin{equation} \label{int_e}
e(\vr,\vt) = c_v \vt + \frac{\vr}{\gamma-1},
\end{equation}
where the unknown function of temperature was set, for simplicity, as a linear one. Moreover, we can view (\ref{Gibbs}) as the definition of a new thermodynamic potential, the specific entropy, which is given uniquely up to an additive constant. It fulfils formally the balance of entropy
$$
\Div\Big(\vr s\vu + \frac{\vc{q}}{\vt}\Big) = \sigma = \frac{\tn{S}:\nabla \vu}{\vt} - \frac{\vc{q}\cdot \nabla \vt}{\vt^2}.
$$
Finally, the data are the external force $\vc{f}$, the given mass $M_{\e}>0$, the external temperature $\vt_0>0$ prescribed on $\partial \Omega_\ep$, and the positive constant $L$.

The existence of strong (or classical) solutions to this system of PDEs under hypothesis made above is out of reach of nowadays mathematics unless we require ``smallness" of the data. We therefore work with weak solutions which are known to exist for the above relations in the range of $m$'s and $\gamma$'s much wider than we need for our purpose of the homogenization study.

\subsection{Weak formulation in perforated domains}
\label{w}

We are in position to present the weak formulation of our problem in $\Omega_\ep$. Below we assume that all functions are sufficiently regular, i.e., all integrals written down are finite.

The weak formulation of the continuity equation reads
\begin{equation} \label{w_cont}
\int_{\RR^3} \vr \vu\cdot \nabla \psi \dx = 0
\end{equation}
for all $\psi \in C^1_c(\R^3)$, where $\vr$ and $\vu$ are extended by zero outside of $\Omega_\e$. Moreover, we need to work with a renormalized form of this equation
\begin{equation} \label{w_cont_ren}
\int_{\RR^3} \Big(b(\vr)\vu \cdot \nabla \psi + (b(\vr)-\vr b'(\vr))\Div\vu \psi\Big)\dx =0
\end{equation}
for all $\psi \in C^1_c(\R^3)$ and all  $b\in C^1([0,\infty))$ such that $b'\in C_0([0,\infty))$, and both $\vr$ and $\vu$ are extended by zero outside of $\Omega_\e$. We remark that this restriction on $b$ could be relaxed, see Remark \ref{rem:ren} below.

The weak formulation of the momentum equation with the homogeneous Dirichlet boundary conditions has the form
\begin{equation} \label{w_mom}
\int_{\Omega_\ep} \Big(-\vr (\vu\otimes\vu):\nabla \vcg{\varphi} - p(\vr,\vt) \Div \vcg{\varphi} + \tn{S}(\vt,\nabla \vu):\nabla \vcg{\varphi}\Big) \dx = \int_{\Omega_\ep} \vr\vc{f}\cdot \vcg{\varphi}\dx
\end{equation}
for all $\vcg{\varphi} \in C^1_c(\Omega_\ep;\R^3)$.

The weak formulation of the total energy balance reads
\begin{equation} \label{w_ener}
-\int_{\Omega_\ep} \Big(\vr E \vu  + p(\vr,\vt)\vu - \tn{S}(\vt,\nabla \vu)\vu + \vc{q}\Big) \cdot \nabla \psi \dx + \int_{\partial \Omega_\ep} L(\vt-\vt_0)\psi \,{\rm d}S = \int_{\Omega_\ep} \vr\vc{f}\cdot \vu \psi \dx
\end{equation}
for all $\psi \in C^1(\overline{\Omega_\ep})$. Furthermore, we also have the entropy inequality
\begin{equation} \label{w_entr}
\begin{aligned}
&\int_{\Omega_\ep} \Big(\frac{\tn{S}(\vt,\nabla\vu)}{\vt} - \frac{\vc{q}\cdot \nabla \vt}{\vt^2}\Big)\psi \dx + \int_{\partial \Omega_\ep} \frac{L \vt_0 }{\vt}\psi \,{\rm d}S
\leq  L \int_{\partial \Omega_\ep}  \psi \,{\rm d}S + \int_{\Omega_\ep} \Big(-\frac{\vc{q}\cdot \nabla \psi}{\vt} -\vr s(\vr,\vt) \vu\cdot \nabla \psi\Big) \dx
\end{aligned}
\end{equation}
for all $\psi \in C^1(\overline{\Omega_\ep})$, non-negative.

\begin{Definition} \label{d_weak_sol}
We say that the triple ($\vr$, $\vu$, $\vt$), $\vr \geq 0$ and $\vt>0$ a.e. in $\Omega_\ep$, is a renormalized weak entropy solution to our problem (\ref{bal_mass})--(\ref{int_e}), if $\vr \in L^\gamma(\Omega_\ep)$, $\vu \in W^{1,2}_0(\Omega_\ep;\R^3)$, $\vt^{\frac{m}{2}}$ and $\log \vt \in W^{1,2}(\Omega_\ep)$ such that $\vr |\vu|^3$, $|\tn{S}(\vt,\nabla\vu)\vu|$ and $p(\vr,\vu)|\vu| \in L^1(\Omega_\ep)$ and the relations (\ref{w_cont}), (\ref{w_cont_ren}), (\ref{w_mom}), (\ref{w_ener}) and (\ref{w_entr}) are fulfilled with test functions specified above.
\end{Definition}

For fixed $\ep>0$ we have the following existence result, see \cite{NoPo_JDE} for detailed proof.
\begin{Theorem}\label{exis_ep}
Let $\vc{f} \in L^\infty(\Omega;\R^3)$, $\vt_0 \in L^1(\partial\Omega_\ep)$, $\vt_0 \geq T_0>0$ a.e. on $\partial \Omega_\ep$, $L>0$, $M_{\e}>0$. Let $\gamma >\frac 53$ and $m>1$. Then there exists a renormalized weak entropy solution ($\vr$, $\vu$, $\vt$) to our problem (\ref{bal_mass})--(\ref{int_e}) in the sense of Definition \ref{d_weak_sol}.
\end{Theorem}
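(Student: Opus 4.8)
We only sketch the argument, which follows the scheme of \cite{NoPo_JDE}; since $\Omega_\ep$ is here a \emph{fixed} bounded $C^2$ domain, all constants below are allowed to depend on $\ep$. The plan is to construct a solution by a multi-layer approximation and then remove the approximation parameters one after another.

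\textbf{Approximate problem.} I would fix a Galerkin dimension $N\in\N$ for the velocity, a vanishing-viscosity parameter $\eta>0$ and an artificial-pressure parameter $\delta>0$: replace $p(\vr,\vt)$ by $p(\vr,\vt)+\delta(\vr^\beta+\vr^2)$ with $\beta$ large, add $-\eta\Delta\vr+\eta\vr$ (with homogeneous Neumann data) to the continuity equation, and regularize the internal-energy balance obtained from \eqref{bal_en} so that it becomes uniformly elliptic and only involves temperatures bounded below by a positive constant, keeping the Robin term $L(\vt-\vt_0)$ with $\vt_0\geq T_0>0$. For $\vu$ in an $N$-dimensional subspace $X_N\subset W^{1,2}_0(\Omega_\ep;\R^3)$ I solve the regularized continuity equation for $\vr=\vr(\vu)\geq 0$ (non-negativity by the maximum principle) and the regularized internal-energy balance for $\vt=\vt(\vu)$ with $0<\underline\vt\leq\vt$, substitute these into the Galerkin projection of \eqref{bal_mom}, and close the finite-dimensional problem by a topological-degree / Leray--Schauder fixed-point argument; this produces approximate solutions.

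\textbf{Uniform estimates.} Testing the Galerkin momentum equation by $\vu$ and combining with the approximate total energy balance tested by $1$ gives the basic energy identity; in particular $\int_{\partial\Omega_\ep}L(\vt-\vt_0)\,{\rm d}S=\int_{\Omega_\ep}\vr\vf\cdot\vu\dx$ bounds $\|\vt\|_{L^1(\partial\Omega_\ep)}$. Testing the approximate entropy inequality by $1$ gives
\[
\int_{\Omega_\ep}\Big(\frac{\tn{S}(\vt,\nabla\vu):\nabla\vu}{\vt}-\frac{\vc{q}\cdot\nabla\vt}{\vt^2}\Big)\dx+\int_{\partial\Omega_\ep}\frac{L\vt_0}{\vt}\,{\rm d}S\leq L|\partial\Omega_\ep|,
\]
which, by \eqref{visc} and \eqref{ht_cond}, bounds $\|\nabla\vt^{m/2}\|_{L^2(\Omega_\ep)}$ and $\|\nabla\log\vt\|_{L^2(\Omega_\ep)}$ and, using $\mu(\vt)/\vt\geq C_1$ together with Korn's inequality, bounds $\|\nabla\vu\|_{L^2(\Omega_\ep)}$. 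With the $L^1(\partial\Omega_\ep)$ bound and a Poincar\'e-type argument this yields $\vt^{m/2}\in W^{1,2}(\Omega_\ep)$, hence $\vt\in L^{3m}(\Omega_\ep)$, and $\vu\in W^{1,2}_0(\Omega_\ep;\R^3)$. Testing the momentum equation by the Bogovskii operator applied to $\vr^{s}$ minus its mean value, for a small $s>0$, and bootstrapping yields $\vr\in L^{\gamma+\Theta}(\Omega_\ep)$ for some $\Theta>0$; combined with the velocity and temperature bounds and Sobolev embeddings, this gives $\vr|\vu|^3$, $p(\vr,\vt)|\vu|$ and $|\tn{S}(\vt,\nabla\vu)\vu|$ in $L^1(\Omega_\ep)$, i.e. exactly the integrability demanded in Definition~\ref{d_weak_sol}. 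The thresholds $\gamma>\frac{5}{3}$ and $m>1$ are what make this chain of estimates self-consistent.

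\textbf{Limit passages.} I then let $\eta\to0$ and $\delta\to0$ (the Galerkin limit $N\to\infty$ being taken at the appropriate stage) and extract weak limits from the uniform bounds. The entropy-production terms are convex, hence weakly lower semicontinuous, which is why the entropy relation survives only as the inequality \eqref{w_entr}. The two genuine difficulties are the strong convergence of the density and of the temperature. For the density I would establish the effective viscous flux identity (the $\vt$-dependence of $\mu$ and $\nu$ being handled through commutators with the double Riesz transform $\partial_i\partial_j(-\Delta)^{-1}$), propagate the renormalized continuity equation to the limit, and bound the oscillation defect measure; together these force $\vr_n\to\vr$ a.e., and this is where $\gamma>\frac{5}{3}$ is essential. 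For the temperature, $\vt_n^{m/2}$ bounded in $W^{1,2}(\Omega_\ep)$ together with Rellich's theorem gives $\vt_n\to\vt$ in $L^q(\Omega_\ep)$ for all $q<3m$ and a.e., after which the nonlinearities $\mu(\vt_n)$, $\vr\,s(\vr,\vt)$ and the heat flux (written as $\vc{q}=-\nabla\mathcal{K}(\vt)$ with $\mathcal{K}'=\kappa$) pass to the limit. With $\vr_n\to\vr$ and $\vt_n\to\vt$ a.e. and the uniform integrability, I identify all the products, recover \eqref{w_cont}--\eqref{w_mom}, obtain the total energy balance \eqref{w_ener} in the limit by adding the limit momentum equation tested by $\vu$ (admissible thanks to the integrability above) to the limit internal-energy balance, and pass to the limit in \eqref{w_entr}; finally $\log\vt\in W^{1,2}(\Omega_\ep)$ rules out $\vt=0$ on a set of positive measure, so $\vt>0$ a.e., and after extending $\vr$ and $\vu$ by zero one verifies \eqref{w_cont} and \eqref{w_cont_ren} on $\R^3$. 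I expect the main obstacle to be the density compactness: making the effective viscous flux identity, the renormalized continuity equation and the oscillation defect measure work in concert in the presence of temperature-dependent viscosities.
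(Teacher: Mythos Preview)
Your sketch is in line with the paper: the paper does not give its own proof of this theorem but simply refers the reader to \cite{NoPo_JDE}, and you explicitly follow the scheme of \cite{NoPo_JDE} (multi-level approximation with artificial pressure and vanishing viscosity, Galerkin for the velocity, a priori bounds from the entropy inequality and total energy balance, Bogovskii testing for the density, effective viscous flux identity and oscillation defect measure for density compactness, Rellich for temperature compactness). So the approach is the same, and the outline is essentially correct for a fixed $C^2$ domain $\Omega_\ep$.
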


\subsection{Main result}
\label{sub_mr}

We now investigate the limit passage $\ep \to 0^+$. In what follows, we will consider a sequence of weak entropy solutions to our problem from Theorem \ref{exis_ep}, denoted as ($\vre$, $\vue$, $\vte$). We will show that, extending suitably the sequence to the whole domain $\Omega$, it is bounded in certain spaces ($\vre$ in $L^{\gamma +\Theta}(\Omega)$ for some $\Theta = \Theta(\gamma,m)>0$, $\vue$ in $W^{1,2}_0(\Omega;\R^3)$ and $\vte$ in $W^{1,2}(\Omega) \cap L^{3m}(\Omega)$). We show that the corresponding weak limit of the extension sequence, taking a subsequence if necessary, solves the same stationary Navier--Stokes--Fourier system in the weak sense in $\Omega$. More precisely, our main result reads

\begin{Theorem} \label{t_main}
Let $\vc{f} \in L^\infty(\Omega;\R^3)$, $M_\e>0$ with $\sup_\e M_\e  = M_1 < \infty$, $\inf_\e M_\e  = M_0 > 0$, $L>0$ and let $\vt_0\geq T_0>0$ in $\Omega$ be defined so that it has finite $L^q$-norm over arbitrary  smooth two-dimensional surface with finite surface area contained in $\Omega$ for some $q > 1$. Let ($\vre$, $\vue$, $\vte$) denote the corresponding renormalized weak entropy solution to (\ref{bal_mass})--(\ref{int_e}) for fixed $\ep >0$, extended suitably to the whole $\Omega$ as shown in Section \ref{s_e} below, for which in particular the extensions preserve their values in $\Omega_\e$.  Let $\alpha>3$, $m>2$ and $\gamma>2$ fulfil $\alpha> \max\{\frac{2\gamma-3}{\gamma-2}, \frac{3m-2}{m-2}\}$. Then there holds the uniform bound
\be\label{uni-bound}
\|\vre\|_{L^{\gamma +\Theta}(\Omega)} + \|\vue\|_{W^{1,2}_0(\Omega)} + \|\vte\|_{W^{1,2}\cap L^{3m}(\Omega)} \leq C,
\ee
where $\Theta := \min\Big\{2\gamma-3, \gamma \frac{3m-2}{3m+2}\Big\}$.
Moreover,  the corresponding weak limit of the sequence for $\ep \to 0^+$ is a renormalized weak solution to problem (\ref{bal_mass})--(\ref{int_e}) in $\Omega$, i.e., it fulfils the continuity equation in the weak and renormalized sense, the mass balance and the total energy balance in the weak sense in $\Omega$, and $\vr \geq 0$ and $\vt >0$ a.e. in $\Omega$.
\end{Theorem}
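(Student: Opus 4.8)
The plan is to proceed in three stages: first construct appropriate extensions to the unperforated domain $\Om$ and collect uniform a priori bounds for $(\vre,\vue,\vte)$; then establish the density estimate in \eqref{uni-bound} by a Bogovskii-type argument adapted to the perforated geometry; and finally pass to the limit $\ep\to 0^+$, exploiting the vanishing volume and capacity of the holes. For the extensions, $\vue$ and $\vre$ are extended by zero, so the velocity stays in $W^{1,2}_0(\Om)$ with unchanged norm and $\vre\in L^\gamma(\Om)$, whereas $\vte$, $\vte^{m/2}$ and $\log\vte$ are extended by the operator $E_\ep$ announced in Section \ref{s_i}, which is bounded $W^{1,2}(\Ome)\to W^{1,2}(\Om)$ and $L^r(\Ome)\to L^r(\Om)$ for every $r\in[1,\infty]$ uniformly in $\ep$, equals the identity on $\Ome$, and preserves non-negativity. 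Then I would derive the classical Navier--Stokes--Fourier estimates directly on $\Ome$: testing \eqref{w_ener} with $\psi\equiv1$ gives $\int_{\partial\Ome}L(\vte-\vt_0)\,{\rm d}S=\intOe{\vre\vf\cdot\vue}$, while testing \eqref{w_entr} with $\psi\equiv1$ controls the entropy production $\intOe{\big(\tn{S}(\vte,\Grad\vue){:}\Grad\vue/\vte+\kappa(\vte)|\Grad\vte|^2/\vte^2\big)}$ together with $\int_{\partial\Ome}L\vt_0\vte^{-1}\,{\rm d}S$ by a constant proportional to $|\partial\Ome|$, which stays bounded because $\alpha>3>\tfrac32$ forces $\sum_n|\partial T_{n,\ep}|\sim\ep^{2\alpha-3}\to0$. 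Using \eqref{visc}, \eqref{ht_cond}, Korn's inequality and the mass constraint $M_0\le M_\ep\le M_1$, a bootstrap with \eqref{w_mom} then yields uniform bounds for $\|\Grad\vue\|_{L^2(\Ome)}$, $\|\Grad\vte^{m/2}\|_{L^2(\Ome)}$ and $\|\Grad\log\vte\|_{L^2(\Ome)}$; combined with $E_\ep$ and the Poincar\'e/trace inequality on the \emph{fixed} domain $\Om$ these upgrade to $W^{1,2}(\Om)$ bounds, whence $\vte\in L^{3m}(\Om)$ by the embedding $\vte^{m/2}\in W^{1,2}(\Om)\hookrightarrow L^6(\Om)$.

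For the density estimate I would employ a Bogovskii-type operator $\mathcal B_\ep\colon L^q_0(\Ome)\to W^{1,q}_0(\Ome;\R^3)$ whose norm is bounded independently of $\ep$ on the relevant range of $q$; this range is precisely what the hypothesis $\alpha>\frac{2\gamma-3}{\gamma-2}$ secures, and the construction---removing the holes from the standard Bogovskii solution on $\Om$ and correcting the divergence locally on the surrounding annuli---is the one developed in \cite{Lu-Stokes,DFL,FL1}. Testing \eqref{w_mom} with $\vcg\varphi_\ep=\mathcal B_\ep\big(\vre^\Theta-\langle\vre^\Theta\rangle_{\Ome}\big)$ produces $\intOe{(\vre^\gamma+\vre\vte)\vre^\Theta}$ on the left-hand side and the convective, viscous and forcing terms paired with $\Grad\vcg\varphi_\ep$ on the right; bounding the latter by the first-stage estimates, by $\vte\in L^{3m}$, and by $\|\vcg\varphi_\ep\|_{W^{1,q'}}\le C\|\vre^\Theta\|_{L^{q'}}$, the estimate closes exactly for $\Theta=\min\{2\gamma-3,\ \gamma\frac{3m-2}{3m+2}\}$---the first value being the usual gain from the $\vre^\gamma$ part of the pressure and the second the gain compatible with the coupling term $\vre\vte$ together with $\vte\in L^{3m}$. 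Since $\vre$ is extended by zero this gives $\|\vre\|_{L^{\gamma+\Theta}(\Om)}\le C$, completing \eqref{uni-bound}.

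For the limit passage I would extract subsequences (not relabelled) with $\vre\rightharpoonup\vr$ in $L^{\gamma+\Theta}(\Om)$, $\vue\rightharpoonup\vu$ in $W^{1,2}_0(\Om)$ and strongly in $L^q(\Om)$ for $q<6$, $\vte\rightharpoonup\vt$ in $W^{1,2}(\Om)$ and strongly in $L^q(\Om)$ for $q<3m$ (hence a.e.), and $M_\ep\to M\in[M_0,M_1]$. Equations \eqref{w_cont} and \eqref{w_cont_ren} pass to the limit on $\R^3$ since the contribution of the holes is carried on $\bigcup_n T_{n,\ep}$, of total volume $\sim\ep^{3\alpha-3}\to0$; combining this with the standard effective-viscous-flux and oscillation-defect arguments (as in \cite{NoPo_JDE}) gives the strong convergence $\vre\to\vr$ in $L^1(\Om)$, the renormalized continuity equation in $\Om$, and $\int_\Om\vr\,\dx=M$. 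For \eqref{w_mom} and \eqref{w_ener}, given a test function supported in $\Om$ I would replace it by one supported in $\Ome$ by cutting it off near each hole, with error vanishing in the appropriate $W^{1,p}$ norm---this is where $\alpha>3$, i.e. the vanishing harmonic capacity of the holes, enters---and then pass each term to the limit using $\mu(\vte)\to\mu(\vt)$ and $\kappa(\vte)\to\kappa(\vt)$ strongly in Lebesgue spaces. The only genuinely new point in the energy balance is that the hole part $\int_{\bigcup_n\partial T_{n,\ep}}L(\vte-\vt_0)\psi\,{\rm d}S$ of the boundary term must vanish; this follows from trace estimates for the extended $\vte\in W^{1,2}(\Om)\cap L^{3m}(\Om)$ and for $\vt_0$ on the shrinking surfaces $\partial T_{n,\ep}$, and is exactly where the second condition $\alpha>\frac{3m-2}{m-2}$ is used. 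Finally $\vt>0$ a.e. follows from the uniform $W^{1,2}$ bound on $\log\vte$.

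I expect the main obstacle to be the temperature: one must construct $E_\ep$ with all the listed properties uniformly in $\ep$, deduce from it the uniform $L^{3m}(\Om)$ bound, and control every temperature boundary term over the holes, which is the genuinely new analytical input. By contrast, the density estimate, though technically heavy, adapts perforated-domain Bogovskii estimates already in the literature, and the remaining part of the limit passage is the standard compressible-flow machinery combined with the smallness of the holes.
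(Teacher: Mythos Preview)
Your plan is correct and matches the paper's approach essentially step for step: entropy and energy tested with $\psi\equiv1$ for the first bounds, the Cioranescu--Paulin/Conca--Donato extension for the temperature to obtain the uniform $L^{3m}(\Om)$ control, the Diening--Feireisl--Lu Bogovskii operator for the density estimate, cut-off of test functions near the holes for the momentum and energy limits, trace control of $\vte$ on $\bigcup_n\partial T_{n,\ep}$, and the effective-viscous-flux argument for strong convergence of the density.

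Two small misattributions to correct before you write details. In the Bogovskii step the constraint $\Theta\le 2\gamma-3$ comes from the \emph{convective} term $\vre(\vue\otimes\vue):\nabla\vcg\varphi$, not from the pressure, and the constraint $\Theta\le\gamma\frac{3m-2}{3m+2}$ comes from the \emph{viscous} term $\tn{S}(\vte,\nabla\vue):\nabla\vcg\varphi$ via the temperature-dependent viscosity $\mu(\vte)\sim 1+\vte$, not from the pressure coupling $\vre\vte$; correspondingly, the hypothesis $\alpha>\frac{3m-2}{m-2}$ is what makes the Bogovskii bound uniform for that viscous term (it is not primarily needed for the hole-boundary term in the energy balance, which already vanishes under $\alpha>3$, $m>2$). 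Getting these right matters because the Bogovskii operator norm carries the factor $1+\ep^{((3-q)\alpha-3)/q}$, and each of the two restrictive terms fixes a different exponent $q$.
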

Note that we do not know whether the entropy inequality \eqref{w_entr} is also fulfilled in the limit. This is an interesting open question. Next, we could skip the requirement that the infimum over all total masses is strictly positive. However, if the limit total mass would be zero, then the solution is trivial ($\varrho =0$, $\vv=\mathbf{0}$ with some temperature distribution) and we prefer to avoid this case. 

\medskip

We give a remark concerning the renormalized equation:
\begin{Remark}\label{rem:ren}
By DiPerna--Lions' transport theory (see {\rm \cite[Section II.3]{DiP-L}} and the modification in {\rm \cite[Lemma 3.3]{N-book}}), for any $\rho\in L^\beta(\Omega),~\beta\geq 2,\ \vv\in W^{1,2}_0(\Omega;\R^3)$, where $\Omega\subset  \R^3$ is a bounded domain of class $C^{0,1}$, such that
$$
\Div (\rho \vv)=0 \quad \mbox{in}\quad \mathcal{D}'(\Omega),
$$
there holds the renormalized equation
$$
 \Div \big(b(\rho)\vv\big)+\big(\rho b'(\rho) -b(\rho)\big)\Div \vv=0, \quad \mbox{in}\ \mathcal{D}'(\R^3),
$$
for any $b\in C^0([0,\infty))\cap C^1((0,\infty))$ satisfying
\be\label{b-pt1}
b'(s)\leq C\, s^{-\lambda_0} \ \mbox{for}\  s\in (0,1],\quad \ b'(s)\leq C\,s^{\lambda_1} \ \mbox{for}\  s\in [1,\infty)
\ee
with
\be\label{b-pt2}
C>0,\quad \lambda_0<1, \quad -1<\lambda_1 \leq \frac{\beta}{2}-1,
\ee
provided $\rho$ and $\vv$ have been extended to be zero outside $\Omega$.

\end{Remark}
From Remark \ref{rem:ren} and estimate \eqref{uni-bound} we see that the continuity equation is satisfied in the renormalized sense with $b$ satisfying weaker assumptions \eqref{b-pt1} and \eqref{b-pt2}.

\section{Uniform bounds}
\label{s_ub}

By virtue of the weak entropy formulation we can deduce several bounds for our sequence in $\Omega_\ep$. We use the weak formulation of the entropy inequality with test function $\psi\equiv 1$ and get that (note that $\sum_{n=1}^{N(\ep)}|\partial T_{n,\ep}|\sim \ep^{2\alpha-3}$ and that $\alpha >3$)
\begin{equation} \label{b1}
\int_{\Omega_\ep} \Big(\frac{\tn{S}(\vte,\nabla\vue):\vue}{\vte} + \frac{(1+\vte^m) |\nabla \vte|^2}{\vte^2}\Big)\dx +  \int_{\partial \Omega_\ep} \frac{L \vt_0 }{\vte} \,{\rm d}S \leq C.
\end{equation}
Further, let us take $\psi \equiv 1$ also in the total energy balance. It gives
\begin{equation} \label{b2}
\int_{\partial \Omega_\ep} L \vte \,{\rm d}S  \leq C\Big(1 + \int_{\Omega_\ep} \vre|\vue|\dx\Big).
\end{equation}
Hence we have, due to the form of the stress tensor and the Korn inequality,\footnote{Recall that for arbitrary $\vu \in W^{1,2}_0(\Omega_\ep;\mbox{\FFF R}^3)$ if follows by direct integration by parts that
$$
\|\vue\|_{W^{1,2}_0(\Omega_\ep)}^2 \leq C \|\nabla \vue\|_{L^2(\Omega_\ep)}^2 \leq C \int_{\Omega_\ep} \frac{\mbox{\FFF{S}}(\vte,\nabla \vue):\nabla \vue}{\vte}\dx.
$$}
\begin{equation} \label{b3}
\begin{aligned}
&\|\vue\|_{W^{1,2}_0(\Omega_\ep)} + \|\nabla \log \vte\|_{L^2(\Omega_\ep)} + \|\nabla |\vte|^{\frac m2}\|_{L^2(\Omega_\ep)} + \Big\|\frac 1{\vte}\Big\|_{L^1(\partial \Omega)} \leq C \\
&\|\vte\|_{L^1(\partial \Omega)} \leq C\big(1+ \|\vre\|_{L^{\frac 65}(\Omega_\ep)}\big).
\end{aligned}
\end{equation}
Sobolev embedding implies that the family $\vue$ is bounded in $L^6(\Omega_\ep)$.  Note that the bounds in \eqref{b3} imply that the norm $\|\vte\|_{L^{3m}(\Omega_\ep)}$ is finite (controlled by the $L^{\frac 65}$-norm of the density), however, we do not know whether it is uniform with respect to $\ep$. Nonetheless, we will verify this in the next section independently of the results which follow. For now, we use the fact that $\|\vte\|_{L^{3m}(\Omega_\ep)}$ is bounded as $\e \to 0^+$ and this fact will be proved later on, independently of the results in this section below. 

In order to estimate the density, we use the result of Diening, Feireisl and Lu. It reads (see \cite[Theorem 2.3]{DFL}): 
\begin{Theorem} \label{t_DFL}
Let a family of domains $\Omega_\ep$ be defined by \eqref{domain} and \eqref{ass-holes}. Then there exists a family of linear operators
$$
\mathcal{B}_\ep\colon L^q_0(\Omega_\ep) \to W^{1,q}_0(\Omega_\ep;\R^3), \quad 1<q<\infty,
$$
such that for arbitrary $f \in L^q_0(\Omega_\ep)$ it holds
$$
\begin{aligned}
&\Div \mathcal{B}_\ep(f) = f \quad \text{ a.e. in }\Omega_\ep, \\
&\|\mathcal{B}_\ep(f)\|_{W^{1,q}_0(\Omega_\ep)} \leq C \big(1 + \ep^{\frac{(3-q)\alpha-3}{q}}\big) \|f\|_{L^q(\Omega_\ep)},
\end{aligned}
$$
where the constant $C$ is independent of $\ep$. Here  $L^q_0(\Omega_\ep)$ denote the set of $L^{q}(\Om_{\e})$ functions which have zero mean value.  
\end{Theorem}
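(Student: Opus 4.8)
The plan is to avoid building a Bogovski\u{\i} operator directly on $\Om_\ep$ (whose operator norm would depend on the Lipschitz character of $\Om_\ep$ and hence degenerate as $\ep\to0$), and instead to reduce everything to the classical Bogovski\u{\i} operator on the unperforated domain $\Om$ composed with a carefully constructed \emph{restriction operator}. First I would recall the standard fact that, since $\Om$ is a bounded $C^2$ (hence Lipschitz) domain, there is a bounded linear operator $\mathcal{B}_\Om\colon L^q_0(\Om)\to W^{1,q}_0(\Om;\R^3)$, $1<q<\infty$, with $\Div\mathcal{B}_\Om(g)=g$ a.e.\ and $\|\mathcal{B}_\Om(g)\|_{W^{1,q}_0(\Om)}\le C(q,\Om)\|g\|_{L^q(\Om)}$ (Bogovski\u{\i}, Galdi), together with its analogue on the annular regions $B_{\delta_1\ep}(x_{n,\ep})\setminus\overline{T_{n,\ep}}$; the uniform $C^2$ assumption on the reference holes $T^0_{n,1}$ and the disjointness of the balls $B_{\delta_2\ep}(x_{n,\ep})$ make all the implied constants uniform in $n$ and $\ep$ after rescaling.

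The core of the argument is the construction of a restriction operator $R_\ep\colon W^{1,q}_0(\Om;\R^3)\to W^{1,q}_0(\Om_\ep;\R^3)$ with the following properties: (a) $R_\ep\vc{v}=\vc{0}$ on $\bigcup_n T_{n,\ep}$; (b) $R_\ep\vc{v}=\vc{v}$ on $\Om\setminus\bigcup_n B_{\delta_1\ep}(x_{n,\ep})$; (c) the divergence is changed only by a locally constant flux term,
$$
\Div\big(R_\ep\vc{v}\big)=\Div\vc{v}+\sum_{n=1}^{N(\ep)}\frac{\mathbf{1}_{B_{\delta_1\ep}(x_{n,\ep})\setminus\overline{T_{n,\ep}}}}{\big|B_{\delta_1\ep}(x_{n,\ep})\setminus\overline{T_{n,\ep}}\big|}\int_{T_{n,\ep}}\Div\vc{v}\,\dx \quad\text{in }\Om_\ep ;
$$
and (d) $\|R_\ep\vc{v}\|_{W^{1,q}_0(\Om_\ep)}\le C\big(1+\ep^{\frac{(3-q)\alpha-3}{q}}\big)\|\vc{v}\|_{W^{1,q}_0(\Om)}$. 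This is done cell by cell: on $B_{\delta_1\ep}(x_{n,\ep})$ write $R_\ep\vc{v}=\vc{v}-\vc{w}_n^{(1)}-\vc{w}_n^{(2)}$, where $\vc{w}_n^{(1)}$ is compactly supported in the cell, equals $\vc{v}$ in a neighbourhood of $\overline{T_{n,\ep}}$, and is built by extending the value of $\vc{v}$ off the hole along the radial profile that minimizes the $L^q$-norm of the gradient at the prescribed flux (a $q$-capacity extremal on the shell $\{2\delta_0\ep^\alpha\lesssim|x-x_{n,\ep}|\lesssim\delta_1\ep\}$), and $\vc{w}_n^{(2)}$ is a local Bogovski\u{\i} corrector on $B_{\delta_1\ep}(x_{n,\ep})\setminus\overline{T_{n,\ep}}$, vanishing on the whole boundary of the fluid part of the cell, restoring the divergence up to the constant appearing in (c). The compatibility (zero-mean) condition needed to invoke the local Bogovski\u{\i} operator for $\vc{w}_n^{(2)}$ is precisely what produces the flux constant in (c), and it holds because $\vc{w}_n^{(1)}$ is compactly supported inside the cell. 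Then, given $f\in L^q_0(\Om_\ep)$, extend it by zero to $\tilde f\in L^q(\Om)$; since $f$ has zero mean over $\Om_\ep$ and vanishes on the holes, $\tilde f\in L^q_0(\Om)$, and I set $\mathcal{B}_\ep(f):=R_\ep\big(\mathcal{B}_\Om(\tilde f)\big)$. Linearity and the membership $\mathcal{B}_\ep(f)\in W^{1,q}_0(\Om_\ep;\R^3)$ are immediate; applying (c) with $\vc{v}=\mathcal{B}_\Om(\tilde f)$ and using $\Div\mathcal{B}_\Om(\tilde f)=\tilde f$ together with $\tilde f\equiv0$ on every $T_{n,\ep}$ makes all correction terms vanish, so $\Div\mathcal{B}_\ep(f)=\tilde f=f$ a.e.\ in $\Om_\ep$; and the estimate follows by composing (d) with $\|\mathcal{B}_\Om(\tilde f)\|_{W^{1,q}_0(\Om)}\le C\|\tilde f\|_{L^q(\Om)}=C\|f\|_{L^q(\Om_\ep)}$.

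The main obstacle is step (d), and in particular the sharp exponent $\frac{(3-q)\alpha-3}{q}$. A naive cutoff of $\vc{v}$ (or of its cell average) near a hole produces a gradient of size $\ep^{-\alpha}$ (respectively $\ep^{-1}$) on a set of the corresponding volume, and summing over the $N(\ep)\sim\ep^{-3}$ holes yields a power far worse than claimed. One is forced to use the $q$-capacity extremal profile on the shell between the hole scale $\ep^\alpha$ and the cell scale $\ep$: the value of that one-dimensional weighted variational problem on a spherical shell of radii $\ep^\alpha$ and $\ep$ is comparable to $\ep^{\alpha(3-q)}$, which after multiplication by the factor $\ep^{-3}$ coming from bounding the cell average of $\vc v$ in terms of $\|\vc v\|_{L^q}$ over the cell, and after summing $q$-th powers over the disjoint cells, gives exactly $\ep^{(3-q)\alpha-3}$, hence the exponent. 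Carrying this out in a general $L^q$ framework rather than in $L^2$ is where the bookkeeping is delicate—one cannot use orthogonality, the $q$-capacity value and the local Bogovski\u{\i} constants are $q$-dependent, and one must track the $\ep$-dependence of the latter on annuli with a shrinking inner boundary while exploiting the assumed uniformity of the holes so that all constants stay uniform in $n$ and $\ep$. The remaining pieces (extending the Poincaré-small oscillation $\vc v-\overline{\vc v}$ off the hole, and the local Bogovski\u{\i} corrector $\vc w_n^{(2)}$) contribute only $C\|\vc v\|_{W^{1,q}_0(\Om)}$ without an $\ep$-power and are routine once the capacity estimate is in place.
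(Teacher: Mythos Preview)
The paper does not actually prove this theorem: it is quoted verbatim from \cite[Theorem~2.3]{DFL} and used as a black box, so there is no ``paper's own proof'' to compare against. Your proposal is therefore to be measured against the construction in \cite{DFL}, and in broad strokes it coincides with it: reduce to the classical Bogovski\u{\i} operator $\mathcal{B}_\Om$ on the unperforated domain, then compose with a restriction operator $R_\ep$ built cell by cell, with a local divergence corrector and a cutoff whose gradient is optimized via the $q$-capacity of the hole in the cell. Your identification of the exponent $\tfrac{(3-q)\alpha-3}{q}$ as coming from the product of the capacity value $\ep^{\alpha(3-q)}$ and the factor $\ep^{-3}$ arising from the bound $|\overline{\vc v}_{\text{cell}}|^q\lesssim \ep^{-3}\|\vc v\|_{L^q(\text{cell})}^q$, summed over disjoint cells, is exactly the mechanism.

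Two points deserve tightening. First, your description of $\vc w_n^{(1)}$ as ``extending the value of $\vc v$ off the hole along the radial $q$-capacity profile'' conflates two steps: what is actually multiplied by the capacity cutoff is a \emph{constant} (the cell average of $\vc v$, or the flux through $\partial T_{n,\ep}$), while the oscillation $\vc v-\overline{\vc v}$ is handled separately via a Poincar\'e/extension argument at the hole scale; you acknowledge this in the last paragraph, but the construction of $\vc w_n^{(1)}$ should be stated accordingly from the outset. Second, the local Bogovski\u{\i} corrector $\vc w_n^{(2)}$ lives on the annulus $B_{\delta_1\ep}(x_{n,\ep})\setminus\overline{T_{n,\ep}}$, whose inner and outer radii have ratio $\ep^{\alpha-1}\to 0$; you must check (by rescaling and the uniform $C^2$ assumption on the model holes, or by a star-shaped decomposition) that its operator norm stays bounded independently of $\ep$ --- this is not automatic for thin annuli, and is the place where the bookkeeping you allude to has real content. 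Once these two points are made precise, your argument is complete and matches \cite{DFL}.
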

  In bounded Lipschitz domain the existence of Bogovskii operator is well-known (see \cite{BOG}, \cite{GAL}).  While the operator norm depends on the Lipschitz character of the domain, and for the perforated domain $\Om_{\e}$, its Lipschitz norm is unbounded  as $\e \to 0^{+}$ due to the presence of small holes.  The above result gives a Bogovskii type operator on perforated domain $\Om_{\e}$ with a precise dependency of the operator norm on $\e$. For some $\e$ and $q$, such a Bogovskii-type operator is uniformly bounded.

Using this result we may get the following estimate of the density:
\begin{Lemma} \label{l_est_dens}
Let $\gamma >2$, $m>2$ and $\alpha> \max \big\{\frac{2\gamma-3}{\gamma-2}, \frac{3m-2}{m-2}\big\}$. Suppose that $\|\vte\|_{L^{3m}(\Omega_\ep)}$ is bounded.  Then the sequence $\{\vre\}$ is bounded
in $L^{\gamma + \Theta}(\Omega_\ep)$, where
\be\label{Theta}
\Theta = \min\Big\{2\gamma-3, \gamma \frac{3m-2}{3m+2}\Big\}.
\ee
\end{Lemma}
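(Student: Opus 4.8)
The plan is to bound $\{\vre\}$ by inserting into the weak momentum equation \eqref{w_mom} the Bogovskii corrector of a suitable power of the density, as in the classical compressible Navier--Stokes--Fourier theory, while tracking the dependence on $\ep$ of every constant. At the outset we already dispose of the $\ep$-uniform bounds $\|\vue\|_{W^{1,2}_0(\Ome)}\le C$ from \eqref{b3} (hence $\|\vue\|_{L^6(\Ome)}\le C$ by the Sobolev embedding, after extending $\vue$ by zero to $\R^3$), the hypothesis $\|\vte\|_{L^{3m}(\Ome)}\le C$, and $\|\vre\|_{L^1(\Ome)}\le C$ from \eqref{giv_mass} together with $\sup_\ep M_\ep<\infty$; moreover $|\Ome|\to|\Om|$, so $|\Ome|^{-1}$ and $|\Ome|^{s}$ for $s\ge 0$ are uniformly bounded, while Theorem \ref{exis_ep} gives $\vre\in L^\gamma(\Ome)$ for each fixed $\ep$.

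The key step is to test \eqref{w_mom} with $\vcg{\varphi}_\ep:=\mathcal{B}_\ep\big(\vre^\Theta-\tfrac{1}{|\Ome|}\intOe{\vre^\Theta}\big)$, with $\mathcal{B}_\ep$ from Theorem \ref{t_DFL}. Since a priori only $\vre\in L^\gamma(\Ome)$ is known, this is made rigorous either by first using $(T_k\vre)^\Theta$ for a truncation $T_k$ and passing $k\to\infty$, or by a short bootstrap raising the integrability of $\vre$ from $\gamma$ to $\gamma+\Theta$ in finitely many steps; below I argue formally. Using $\Div\vcg{\varphi}_\ep=\vre^\Theta-|\Ome|^{-1}\intOe{\vre^\Theta}$ and \eqref{press} one obtains
\[
\begin{aligned}
\intOe{\big(\vre^{\gamma+\Theta}+\vre^{1+\Theta}\vte\big)}
&=\frac{1}{|\Ome|}\Big(\intOe{\vre^\Theta}\Big)\Big(\intOe{p(\vre,\vte)}\Big)
+\intOe{\tn{S}(\vte,\nabla\vue):\nabla\vcg{\varphi}_\ep}\\
&\quad-\intOe{\vre(\vue\otimes\vue):\nabla\vcg{\varphi}_\ep}-\intOe{\vre\vf\cdot\vcg{\varphi}_\ep},
\end{aligned}
\]
and the second, non-negative term on the left is discarded. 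The exponent $\Theta$ dictates the choice $q:=\frac{\gamma+\Theta}{\Theta}$, for which $\vre^\Theta\in L^q_0(\Ome)$, and then Theorem \ref{t_DFL} yields the $\ep$-uniform bound $\|\nabla\vcg{\varphi}_\ep\|_{L^q(\Ome)}\le C\|\vre\|_{L^{\gamma+\Theta}(\Ome)}^\Theta$; this is exactly where the assumptions $\alpha>\frac{2\gamma-3}{\gamma-2}$ and $\alpha>\frac{3m-2}{m-2}$ are used, being equivalent to $q\le 3-\frac3\alpha$ for $\Theta=2\gamma-3$ and for $\Theta=\gamma\frac{3m-2}{3m+2}$ respectively.

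It then remains to show that every term on the right is either $\ep$-uniformly bounded or of the form $C\|\vre\|_{L^{\gamma+\Theta}(\Ome)}^{\sigma}$ with $\sigma<\gamma+\Theta$, so that Young's inequality absorbs it into the left-hand side. For the viscous term, \eqref{visc} gives $\|\tn{S}(\vte,\nabla\vue)\|_{L^{6m/(3m+2)}(\Ome)}\le C(1+\|\vte\|_{L^{3m}(\Ome)})\|\nabla\vue\|_{L^2(\Ome)}\le C$, and H\"older with conjugate exponent $\tfrac{6m}{3m-2}$ --- which is $\le q$ precisely when $\Theta\le\gamma\frac{3m-2}{3m+2}$ --- gives $C\|\vre\|_{L^{\gamma+\Theta}(\Ome)}^{\Theta}$. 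For the convective term, $\|\vue\|_{L^6(\Ome)}^2\le C$ and H\"older force $\vre$ into $L^{a}(\Ome)$ with $a=\frac{3q}{2q-3}=\frac{3(\gamma+\Theta)}{2\gamma-\Theta}$, and $a\le\gamma+\Theta$ is exactly $\Theta\le 2\gamma-3$; this yields $C\|\vre\|_{L^{\gamma+\Theta}(\Ome)}^{1+\Theta}$. The forcing term is treated the same way via $\vf\in L^\infty$ and the Sobolev embedding $W^{1,q}_0(\Ome)\hookrightarrow L^{q^\ast}(\Ome)$, producing a power of $\vre$ strictly below that of the convective term. Finally, interpolating $\intOe{\vre^\Theta}=\|\vre\|_{L^\Theta(\Ome)}^\Theta$ between $L^1$ and $L^{\gamma+\Theta}$ (here $\Theta\ge1$) and using the uniform $L^1$-bound gives $|\Ome|^{-1}\intOe{\vre^\Theta}\le C\|\vre\|_{L^{\gamma+\Theta}(\Ome)}^{\lambda\Theta}$ with $\lambda<1$, while $\intOe{p(\vre,\vte)}\le C(1+\|\vre\|_{L^{\gamma+\Theta}(\Ome)}^\gamma)$ by \eqref{press} and $\|\vte\|_{L^{3m}(\Ome)}\le C$, so the mean-pressure term is $\le C(1+\|\vre\|_{L^{\gamma+\Theta}(\Ome)}^{\lambda\Theta+\gamma})$ with $\lambda\Theta+\gamma<\gamma+\Theta$. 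Absorbing everything gives $\|\vre\|_{L^{\gamma+\Theta}(\Ome)}\le C$ uniformly in $\ep$, and removing the truncation completes the proof.

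The main obstacle is the exponent bookkeeping. One wants $q$ as large as possible, to make the powers of $\vre$ in the convective and viscous H\"older estimates small, but $q=(\gamma+\Theta)/\Theta$ is tied to $\Theta$ and is capped by $q\le 3-\frac3\alpha$, which is exactly the threshold beyond which the Bogovskii constant in Theorem \ref{t_DFL} ceases to be $\ep$-uniform. The two ceilings this produces --- $\Theta\le 2\gamma-3$ from the convective term and $\Theta\le\gamma\frac{3m-2}{3m+2}$ from the viscous term --- together with the compatibility conditions $\alpha>\frac{2\gamma-3}{\gamma-2}$ and $\alpha>\frac{3m-2}{m-2}$ are precisely what make $\Theta$ in \eqref{Theta} admissible. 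A secondary technical issue is the rigorous justification, by truncation or bootstrap, of using $\vre^\Theta$ as a multiplier when only $\vre\in L^\gamma(\Ome)$ is known at the start.
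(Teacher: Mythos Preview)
Your proof is correct and follows essentially the same approach as the paper: testing the momentum balance with $\mathcal{B}_\ep(\vre^\Theta-\langle\vre^\Theta\rangle)$ and estimating the convective, viscous, forcing, and mean--pressure terms by H\"older's inequality, with the two ceilings $\Theta\le 2\gamma-3$ and $\Theta\le\gamma\tfrac{3m-2}{3m+2}$ arising from the convective and viscous terms respectively, and the conditions on $\alpha$ ensuring the $\ep$-uniform Bogovskii bound from Theorem~\ref{t_DFL}. The only organizational difference is that the paper applies $\mathcal{B}_\ep$ separately in $L^{q_1}$ and $L^{6m/(3m-2)}$ and then takes $\Theta=\min\{\Theta_1,\Theta_2\}$ at the end, whereas you fix $\Theta$ first, use the single exponent $q=(\gamma+\Theta)/\Theta$, and step down by H\"older; the content is identical, and your remark on the truncation/bootstrap justification is in fact more careful than the paper's formal argument.
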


\begin{proof}
We use the version of the Bogovskii operator from Theorem \ref{t_DFL}.
We use as a test function in (\ref{w_mom}) the following function
$$\vcg{\varphi} := {\mathcal{B_\ep}}\big(\vre^\Theta - \langle \vre^\Theta \rangle \big), \quad \langle \vre^\Theta \rangle := \frac{1}{|\Omega_\ep|}\int_{\Omega_\ep} \vre^\Theta \dx,$$
where $\Theta>0$ to be determined. Recall that
\be\label{c0}
\|\nabla \vcg{\varphi}\|_{L^q(\Omega_\ep)} \leq C(\e,q) \|\vre^\Theta\|_{L^q(\Omega)}, \quad \mbox{with} \ C(\e,q) : =  C \big(1 + \ep^{\frac{(3-q)\alpha-3}{q}}\big).
\ee
 We see that $C(\e, q)$ is independent of $\ep$ provided $1<q<3$ satisfying $(3-q)\alpha-3\geq 0$. We get
\begin{equation}\label{c1}
  \int_{\Omega_\ep} p(\vre,\vte)\vre^\Theta \dx
	= \int_{\Omega_\ep}\Big(p(\vre,\vte) \frac{1}{|\Omega_\ep|}\int_{\Omega_\ep} \vre^\Theta \dx- \vre(\vue\otimes\vue):\nabla \vcg{\varphi}
	+ \tn{S}(\vte,\nabla \vue):\nabla \vcg{\varphi} - \vre\vc{f}\cdot\vcg{\varphi}\Big)\dx.
\end{equation}
We now estimate the right hand-side of \eqref{c1} term by term. We start with the two most restrictive terms which give the limit on the exponent $\Theta$.
First, we consider
\ba\label{density-est-1}
  \Big|\int_{\Omega_\ep}\vre(\vue\otimes\vue):\nabla\vcg{\varphi}\dx\Big|
	&\le \|\vue\|_{L^6(\Omega_\ep)}^2\|\vre\|_{L^{\gamma+\Theta}(\Omega_\ep)}
	\|\nabla\vcg{\varphi}\|_{L^{q_{1}}(\Omega_\ep)} \\
	&\le C(\e,q_1) \|\vue\|_{L^6(\Omega_\ep)}^2\|\vre\|_{L^{\gamma+\Theta}(\Omega_\ep)}
	\|\vre^\Theta\|_{L^{q_1}(\Omega_\ep)}\\
& \le C(\e,q_1) \|\vue\|_{L^6(\Omega_\ep)}^2\|\vre\|_{L^{\gamma+\Theta}(\Omega_\ep)}
	\|\vre\|_{L^{q_1\Theta}(\Omega_\ep)}^\Theta,
\ea
where
\be\label{density-est-2}
\frac{1}{q_1} = 1 - \frac{1}{3} - \frac{1}{\gamma+\Theta}, \quad C(\e,q_1) = C \big(1 + \ep^{\frac{(3-q_1)\alpha-3}{q_1}}\big).
\ee
We want to choose $\Theta$ as large as possible such that $\vr_\e$ enjoys as high as possible integrability. For this reason, we choose $\Theta$ such that $q_1 \Theta = \gamma+ \Theta.$  Together with \eqref{density-est-2}, we end up with
$$
\Theta = \Theta_1 : = 2\gamma-3>1, \quad  q_1 = \frac{3(\gamma-1)}{2\gamma-3}, \quad (3-q_1)\alpha-3  = 3\left[ \frac{\gamma-2}{2\gamma -3} \alpha - 1\right].
$$
 Hence, under the condition
$$
\frac{\gamma-2}{2\gamma -3} \alpha \geq 1,
$$
we have $C(q_1,\e)$ independent of $\e$, and by using \eqref{b3}, we deduce from \eqref{density-est-1} that
$$
\Big|\int_{\Omega_\ep}\vre(\vue\otimes\vue):\nabla\vcg{\varphi}\dx\Big| \le C \|\vre\|_{L^{\gamma+\Theta_1}(\Omega_\ep)}^{\Theta_1+1},
$$
 where in particular $C$ is independent of $\e$ and $\Theta_1+1 < \gamma  + \Theta_1.$

\medskip

 Next we calculate
\ba\label{density-est-4}
  \Big|\int_{\Omega_\ep}\tn{S}(\vte,\nabla\vue):\nabla\vcg{\varphi}\dx\Big|
	&\le C\big(\e,\frac{6m}{(3m-2)}\big)(1+\|\vte\|_{L^{3m}(\Omega_\ep)}) \|\nabla \vue\|_{L^2(\Omega_\ep)} \|\nabla\vcg{\varphi}\|_{L^{\frac{6m}{(3m-2)}}(\Omega_\ep)} \\
	&\le C\big(\e,\frac{6m}{(3m-2)}\big)\|\nabla \vue\|_{L^2(\Omega_\ep)} \|\vre^{\Theta}\|_{L^{\frac{6m}{(3m-2)}}(\Omega_\ep)} \\
	&\le C\big(\e,\frac{6m}{(3m-2)}\big)  \|\nabla \vue\|_{L^2(\Omega_\ep)}  \|\vre\|_{L^{\frac{6m \Theta}{(3m-2)}}(\Omega_\ep)}^{\Theta},
\ea
where $C\big(\e,\frac{6m}{(3m-2)}\big)$ is defined in the same manner as $C(\e,q)$ in \eqref{c0}. We choose $\Theta$ such that $\frac{6m \Theta}{(3m-2)} = \gamma + \Theta$ and we end up with
$$
\Theta = \Theta_2 : = \frac{\gamma (3m-2)}{(3m+2)}>1, \quad  \left(3- \frac{6m}{(3m-2)}\right)\alpha-3  = 3\left[ \frac{m-2}{3m -2} \alpha - 1\right].
$$
We see  $C\big(\e,\frac{6m}{(3m-2)}\big)$ is independent of $\e$ provided $ \frac{m-2}{3m -2} \alpha \geq 1$, and moreover by using \eqref{b3} we deduce from \eqref{density-est-4} that
\ba\label{density-est-6}
  \Big|\int_{\Omega_\ep}\tn{S}(\vte,\nabla\vue):\nabla\vcg{\varphi}\dx\Big| \le C  \|\vre\|_{L^{\gamma+ \Theta_2}(\Omega_\ep)}^{\Theta_2}.
\ea

\medskip

From \eqref{density-est-1}--\eqref{density-est-6}, we see that by choosing
$$
\Theta : = \min\{\Theta_1, \Theta_2\} =  \min\Big\{2\gamma-3, \gamma \frac{3m-2}{3m+2}\Big\}>1, \quad \alpha > \max\Big\{\frac{2\gamma-3}{\gamma-2}, \frac{3m-2}{m-2}\Big\}>3,
$$
there holds
\ba\label{density-est-7}
\Big|\int_{\Omega_\ep}\vre(\vue\otimes\vue):\nabla\vcg{\varphi}\dx\Big| +  \Big|\int_{\Omega_\ep}\tn{S}(\vte,\nabla\vue):\nabla\vcg{\varphi}\dx\Big| \le C\left(1 + \|\vre\|_{L^{\gamma+\Theta}(\Omega_\ep)}^{\Theta+1}\right).
\ea

\medskip

 Further, due to the restriction $\Theta \leq 2\gamma-3$ we have $\frac 32 \Theta < \gamma + \Theta$; and note also that $\|\vcg{\varphi}\|_{L^3(\Omega_{\e})}\leq C\|\nabla \vcg{\varphi}\|_{L^{3/2}(\Omega_\ep)}$ holds with a constant independent of $\ep$ due to the zero trace of $\vcg{\varphi}$ on $\partial \Omega_\ep$; we thus deduce
\ba\label{density-est-8}
  \Big|\int_{\Omega_\ep}\vre\vc{f}\cdot\vcg{\varphi}\dx\Big|
	&\le \|\vc{f}\|_{L^\infty(\Omega_\ep)}\|\vre\|_{L^{3/2}(\Omega_\ep)}
	\|\vcg{\varphi}\|_{L^{3}(\Omega_\ep)} \\
	&\le C\|\vc{f}\|_{L^\infty(\Omega_\ep)}\|\vre\|_{L^{3/2}(\Omega_\ep)}
	\|\nabla \vcg{\varphi}\|_{L^{3/2}(\Omega_\ep)} \\
	&\leq C \|\vre\|_{L^{\gamma + \Theta}(\Omega)}^{1+\Theta}.
\ea

Finally, due to the form of the pressure
\ba\label{density-est-9}
  \int_{\Omega_\ep} p(\vre, \vte) \dx \frac{1}{|\Omega_\ep|}\int_{\Omega_\ep} \vre^\Theta \dx
	& \leq C \int_{\Omega_\ep}(\vre \vte+\vre^\gamma)\dx\int_{\Omega_\ep} \vre^\Theta \dx \\
&\leq C \left(  \|\vte\|_{L^{6}(\Omega)}  \|\vre\|_{L^{\frac{6}{5}}(\Omega)} + \|\vre\|_{L^{\gamma}(\Omega)}^{\gamma} \right)\|\vre\|_{L^{\Theta}(\Omega)}^{\Theta}\\
&\leq C \left( 1 + \|\vre\|_{L^{\gamma}(\Omega)}^{\gamma}\|\vre\|_{L^{\Theta}(\Omega)}^{\Theta} \right)\\
&\leq C \left( 1 + \|\vre\|_{L^{\gamma+\Theta}(\Omega)}^{\lambda} \right),
\ea
for some $\lambda < \gamma+\Theta$. In the last inequality in \eqref{density-est-9} we used the interpolation between the $L^1$ and $L^{\gamma +\Theta}$ norms to control $\|\vre\|_{L^{\gamma}(\Omega)}$ and $\|\vre\|_{L^{\Theta}(\Omega)}$,  as we control the $L^1$ norm of the density (i.e., the total mass). Here we assume that $\vte$ is bounded in $L^{2m}(\Ome)\subset L^6(\Omega_\e)$. 
	

Collecting the estimates in \eqref{density-est-7}, \eqref{density-est-8} and \eqref{density-est-9}, we derive from \eqref{density-est-1} that
$$
 \|\vre\|_{L^{\gamma+\Theta}(\Omega)}^{\gamma+\Theta} \leq C \left( 1 + \|\vre\|_{L^{\gamma+\Theta}(\Omega)}^{\lambda} \right), \quad \mbox{for some $1<\lambda < \gamma+ \Theta$}.
$$
 This implies our desired estimate of $\|\vre\|_{L^{\gamma+\Theta}(\Omega)}$, which is uniform with respect to $\ep$ provided we have the uniform bound for $\vte$ in $L^{3m}(\Omega_\ep)$.
\end{proof}

Hence, combining the result of the previous lemma with available estimates, under the assumptions in Lemma \ref{l_est_dens}, we end up with 
$$
\|\vue\|_{W^{1,2}(\Omega_\ep)} + \|\vre\|_{L^{\gamma +\Theta}(\Omega_\ep)} + \|\log \vte\|_{W^{1,2}(\Omega_\ep)} + \|\nabla\vte^{\frac m2}\|_{L^{2}(\Omega_\ep)} \leq C
$$
and
$$
\|\vte\|_{L^1(\partial \Omega)} + \|\vt^{-1}_\ep\|_{L^1(\partial \Omega)} \leq C.
$$
Note, however, that we still need to know that the $L^{3m}(\Omega_\ep)$ norm of $\vte$ is controlled uniformly with respect to $\ep$. We will show this in the next section.

\section{Extensions of functions}
\label{s_e}

We can now extend our triple of functions to the whole $\Omega$. For the density and the velocity we simply extend the functions by zero.  After this extension we still have
$$
\begin{aligned}
\|\vu_\ep\|_{W^{1,2}_0(\Omega)} =\|\vu_\ep\|_{W^{1,2}_0(\Omega_{\e})} \leq C, \quad \|\vue\|_{L^6(\Omega)} \leq C\|\nabla \vue\|_{L^2(\Omega)} = C \|\nabla \vue\|_{L^2(\Omega_\ep)} \leq C, \quad  \|\vre\|_{L^{\gamma+\Theta}(\Omega)} \leq C.
\end{aligned}
$$
However, the issue with the temperature is more delicate.
Note that we in general even do not know whether it holds
$$
\|\vte\|_{L^{3m}(\Omega_\ep)} \leq C
$$
with a constant $C$ independent of $\ep$, as the domains $\Omega_\ep$ are not uniformly Lipschitz with respect to $\ep \to 0^+$. We start with one more general result which is due to Conca and Dorato \cite{CoDo_88} which uses even an older idea of Cioranescu and Paulin \cite{CiSJP_79}. Since we need a slightly stronger information from their result, we present the full proof of the result.

\begin{Lemma} \label{l_ext_CD}
Let $\Om_{\e}$ be given by \eqref{domain} and \eqref{ass-holes}. There exists an extension operator $E_\ep$: $W^{1,2}(\Omega_\ep) \to W^{1,2}(\Omega)$ such that for each $\varphi \in W^{1,2}(\Omega_\ep)$,
$$
\begin{aligned}
& E_\ep \varphi(x) = \varphi(x), \quad x\in \Omega_\ep,\\
& \|\nabla E_\ep \varphi \|_{L^2(T_{n,\ep})} \leq C \|\nabla \varphi\|_{L^2(B_{2\delta_0\ep^\alpha}(x_{n,\ep})\setminus T_{n,\ep})}
\end{aligned}
$$
and hence $\|\nabla E_\ep \varphi\|_{L^2(\Omega)} \leq C \|\nabla \varphi\|_{L^2(\Omega_\ep)}$.
Moreover, for all $1\leq q \leq \infty$,
$$
\|E_\ep \varphi\|_{L^q(T_{n,\ep})} \leq C \|\varphi\|_{L^q(B_{2\delta_0\ep^\alpha}(x_{n,\ep})\setminus T_{n,\ep})}.
$$
The constant $C$ is independent of $\ep$ and $n$.

 Furthermore, there is an extension operator $\tilde E_\e: W^{1,2}_{\geq 0}(\Omega_\ep)\to W^{1,2}_{\geq 0}(\Omega)$ such that the above properties are also satisfied. Here $W^{1,2}_{\geq 0}(\Omega_\ep)$ denotes the set of nonnegative functions in $W^{1,2}(\Omega_\ep)$.
\end{Lemma}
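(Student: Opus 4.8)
\medskip\noindent\emph{Proof strategy.}\quad The plan is to follow the classical Cioranescu--Paulin construction \cite{CiSJP_79} (in the form used by Conca--Dorato \cite{CoDo_88}): since the holes $T_{n,\e}$ are pairwise disjoint, each is nested inside the ball $B_{\delta_1\e}(x_{n,\e})$, the larger balls $B_{\delta_2\e}(x_{n,\e})$ are disjoint, and each annular shell $A_{n,\e}:=B_{2\delta_0\e^\a}(x_{n,\e})\setminus T_{n,\e}$ lies in $\Om_\e$, it suffices to build the extension separately inside each hole and to control each piece by $\varphi$ restricted to the corresponding shell $A_{n,\e}$. Rescaling by $y=(x-x_{n,\e})/\e^\a$ turns the hole into the reference domain $T^0_{n,1}$, the surrounding shell into $D_n:=B_{2\delta_0}(0)\setminus T^0_{n,1}$, and — crucially — all geometric constants below (diameters, Poincar\'e constants, extension-operator norms) become uniform in $n$ because the $T^0_{n,1}$ are uniformly $C^2$.

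The reference step uses two classical facts, both uniform in $n$. First, there is a linear extension operator $\mathcal{E}_n\colon W^{1,2}(D_n)\to W^{1,2}(B_{2\delta_0}(0))$ which preserves values on $D_n$ and is simultaneously bounded from $L^q(D_n)$ to $L^q(B_{2\delta_0}(0))$ for every $q\in[1,\infty]$; such an operator is produced by flattening the $C^2$ inner boundary $\partial T^0_{n,1}$ by local diffeomorphisms, reflecting across it, and patching with a cutoff, its norm depending only on the $C^2$-character of $T^0_{n,1}$ (one may also invoke Stein's extension operator). Second, the Poincar\'e--Wirtinger inequality $\|g-\langle g\rangle_{D_n}\|_{L^2(D_n)}\le C\|\nabla g\|_{L^2(D_n)}$ holds with $C$ independent of $n$, the $D_n$ being connected, uniformly Lipschitz domains squeezed between $B_{2\delta_0}(0)\setminus B_{\delta_0}(0)$ and $B_{2\delta_0}(0)$. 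We then set $E_\e\varphi=\varphi$ on $\Om_\e$ and, on each $T_{n,\e}$, $E_\e\varphi(x)=\big(\mathcal{E}_n(\psi_n-\langle\psi_n\rangle_{D_n})\big)\big((x-x_{n,\e})/\e^\a\big)+\langle\psi_n\rangle_{D_n}$, where $\psi_n(y)=\varphi(x_{n,\e}+\e^\a y)$ and $\langle\psi_n\rangle_{D_n}=\langle\varphi\rangle_{A_{n,\e}}$. Subtracting the shell mean before extending and adding it back — the only genuinely clever point — is what upgrades the $W^{1,2}$ extension bound, which a priori would also involve $\|\varphi\|_{L^2}$, to the gradient-only estimate claimed.

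It remains to verify the bounds. Since $\mathcal{E}_n$ reproduces $\psi_n-\langle\psi_n\rangle_{D_n}$ on $D_n$, the function $E_\e\varphi$ equals $\varphi$ on $A_{n,\e}$, so traces match across $\partial T_{n,\e}$ and $E_\e\varphi\in W^{1,2}(\Om)$. On each hole the chain rule yields a factor $\e^{-\a}$ on $\nabla_x$ and $\e^{3\a}$ on ${\rm d}x$; combining this with $\|\nabla_y\mathcal{E}_n(\psi_n-\langle\psi_n\rangle_{D_n})\|_{L^2}\le C\|\nabla_y\psi_n\|_{L^2(D_n)}$ (extension bound $+$ Poincar\'e) and $\|\mathcal{E}_n(\psi_n-\langle\psi_n\rangle_{D_n})\|_{L^q}\le C(\|\psi_n\|_{L^q(D_n)}+|\langle\psi_n\rangle_{D_n}|)$, while using $|A_{n,\e}|\sim\e^{3\a}\sim|T_{n,\e}|$ to get $|\langle\varphi\rangle_{A_{n,\e}}|\,|T_{n,\e}|^{1/q}\le C\|\varphi\|_{L^q(A_{n,\e})}$, all powers of $\e$ cancel and one obtains exactly $\|\nabla E_\e\varphi\|_{L^2(T_{n,\e})}\le C\|\nabla\varphi\|_{L^2(A_{n,\e})}$ and $\|E_\e\varphi\|_{L^q(T_{n,\e})}\le C\|\varphi\|_{L^q(A_{n,\e})}$ with $C$ uniform in $n$ and $\e$. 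Summing over $n$ and using that the shells $A_{n,\e}\subset\Om_\e$ are pairwise disjoint (contained in the disjoint balls $B_{\delta_2\e}(x_{n,\e})$), together with $E_\e\varphi=\varphi$ on $\Om_\e$, gives $\|\nabla E_\e\varphi\|_{L^2(\Om)}\le C\|\nabla\varphi\|_{L^2(\Om_\e)}$.

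For the non-negativity-preserving operator I would simply take $\tilde E_\e\varphi:=(E_\e\varphi)^+=\max\{E_\e\varphi,0\}$ for $\varphi\ge0$: this is non-negative, lies in $W^{1,2}(\Om)$, equals $\varphi^+=\varphi$ on $\Om_\e$, and satisfies $|\nabla(E_\e\varphi)^+|\le|\nabla E_\e\varphi|$ and $0\le(E_\e\varphi)^+\le|E_\e\varphi|$ pointwise, so it inherits all the above estimates (note $\tilde E_\e$ is nonlinear, which the statement permits). I expect the real obstacle to be not any single inequality but the \emph{uniformity} of every constant — extension norm, Poincar\'e constant, volume comparisons — in $n$, hence in the number $N(\e)\sim\e^{-3}$ of holes; this is precisely where the hypotheses that the $T^0_{n,1}$ are uniformly $C^2$ and that each hole is comfortably nested, $B_{2\delta_0\e^\a}(x_{n,\e})\subset B_{\delta_1\e}(x_{n,\e})$ with the $B_{\delta_2\e}(x_{n,\e})$ disjoint, enter.
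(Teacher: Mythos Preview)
Your proposal is correct and follows essentially the same route as the paper's proof: rescale each shell to the fixed reference pair $(B_{2\delta_0}(0),\,T^0_{n,1})$, subtract the mean over the reference shell, apply a standard $W^{1,2}$- and $L^q$-bounded extension operator to the zero-mean part (invoking Poincar\'e--Wirtinger to obtain the gradient-only estimate), add the mean back, and undo the rescaling; for the non-negative variant both you and the paper take $\tilde E_\e\varphi=\max\{0,E_\e\varphi\}$. Your emphasis on the uniformity of all constants in $n$ and on the cancellation of the $\e^\alpha$ powers under rescaling is exactly what the paper exploits.
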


\begin{proof}
We start as in the proof of \cite[Lemma A.1]{CoDo_88}. We namely show the existence of the extension operator from $W^{1,2}(B_{2\delta_0\ep^\alpha}(x_{n,\ep})\setminus T_{n,\ep}) \to W^{1,2}(B_{2\delta_0\ep^\alpha}(x_{n,\ep}))$ satisfying the properties above. To this aim, recall the assumption on the distribution of the holes in \eqref{ass-holes} and let $\varphi \in W^{1,2}(B_{2\delta_0}(0)\setminus T_{n,1}^0)$. We write for $x \in B_{2\delta_0}(0)\setminus T^0_{n,1}$
$$
\varphi = M\varphi + \psi,
$$
where $M\varphi := \frac{1}{|B_{2\delta_0}(0)\setminus T_{n,1}^0|}\int_{B_{2\delta_0}(0)\setminus T_{n,1}^0} \varphi \dx$ (the mean value) and $M\psi =0$.  Since  $T_{n,1}^0$ are uniformly $C^2$-domains, then for each $n$, there exists an extension operator $\tilde S$ from $W^{1,2}(B_{2\delta_0}(0)\setminus T_{n,1}^0)$ to $W^{1,2}(B_{2\delta_0}(0))$ such that for each $\psi \in W^{1,2}(B_{2\delta_0}(0)\setminus T_{n,1}^0)$, there holds
\ba\label{pt-tildeS-1}
&\tilde S \psi(x) = \psi(x), \quad x\in B_{2\delta_0}(0)\setminus T_{n,1}^0,\\
&\|\tilde S \psi\|_{W^{1,2}(B_{2\delta_0}(0))} \leq C \|\psi\|_{W^{1,2}(B_{2\delta_0}(0)\setminus T_{n,1}^0)},\\
&\|\tilde S \psi\|_{L^{r}(B_{2\delta_0}(0))} \leq C \|\psi\|_{L^{r}(B_{2\delta_0}(0)\setminus T_{n,1}^0)}, \quad \forall\, 1\leq r \leq \infty,
\ea
where $C$ is independent of $n$ and $r$.  We apply $\tilde S$ on the function $\psi$ in $B_{2\delta_0}(0)\setminus T_{n,1}^0$. Since the mean value of $\psi$ is zero in $B_{2\delta_0}(0)\setminus T_{n,1}^0$,  we get
\be\label{pt-tildeS-2}
\|\tilde S \psi\|_{W^{1,2}(B_{2\delta_0}(0))} \leq C \|\psi\|_{W^{1,2}(B_{2\delta_0}(0)\setminus T_{n,1}^0)} \leq C \|\nabla \psi\|_{L^{2}(B_{2\delta_0}(0)\setminus T_{n,1}^0)}  = C \|\nabla \varphi\|_{L^{2}(B_{2\delta_0}(0)\setminus T_{n,1}^0)}.
\ee
We now set for $x \in B_{2\delta_0}(0)$
\be\label{def-S}
S \varphi := M\varphi + \tilde S \psi.
\ee
By \eqref{pt-tildeS-1} and \eqref{pt-tildeS-2}, we still keep
\ba\label{pt-tildeS-3}
&\tilde S \varphi(x) = \varphi(x), \quad x\in B_{2\delta_0}(0)\setminus T_{n,1}^0,\\
&\|\nabla \tilde S \varphi\|_{L^{2}(B_{2\delta_0}(0))} \leq C \|\nabla \varphi\|_{L^{2}(B_{2\delta_0}(0)\setminus T_{n,1}^0)},\\
&\|\tilde S \varphi\|_{L^{q}(B_{2\delta_0}(0))} \leq C \|\varphi\|_{L^{q}(B_{2\delta_0}(0)\setminus T_{n,1}^0)}, \quad \forall\, 1\leq q \leq \infty,
\ea
where the constant $C$ can be taken independent of $n$.

\medskip

We are now ready to define our desired extension operator $E_\e$.   For each $\varphi \in W^{1,2}(B_{2\delta_0\ep^\alpha}(x_{n,\ep})\setminus T_{n,\ep})$, define
$$
\tilde \varphi (y) := \varphi(x_{n,\e} + \e^\alpha y), \quad \forall \, y\in B_{2\delta_0}(0)\setminus T_{n,1}^0.
$$
Then $\tilde \varphi \in W^{1,2}(B_{2\delta_0}(0)\setminus T_{n,1}^0)$. We apply the extension operator $S$ defined through \eqref{def-S} to $\tilde \varphi$ and obtain $S \tilde \varphi \in W^{1,2}(B_{2\delta_0}(0))$. Finally we define the extension operator $E_{\e}$ as
$$
E_\ep \varphi(x) := (S \tilde \varphi) \left(\frac{x - x_{n,\e}}{\e^\alpha}\right).
$$
Clearly $E_\ep \varphi \in  W^{1,2}(B_{2\delta_0\ep^\alpha}(x_{n,\ep}))$ and $E_\e \varphi = \varphi$ in  $ B_{2\delta_0\ep^\alpha}(x_{n,\ep})\setminus T_{n,\ep}$ due to  the first property in \eqref{pt-tildeS-3}. By the second property in \eqref{pt-tildeS-3}, we then calculate
\begin{align*}
\int_{B_{2\delta_0\ep^\alpha}(x_{n,\ep})} |\nabla_x E_\ep\varphi|^2 \dx & =  \int_{B_{2\delta_0\ep^\alpha}(x_{n,\ep})} \ep^{-2\alpha}\left|(\nabla_y S\tilde \varphi) \left(\frac{x - x_{n,\e}}{\e^\alpha}\right)\right|^2 \,{\rm d} x \\
& = \ep^{\alpha} \int_{B_{2\delta_0}(0)}  \left|(\nabla_y S\tilde \varphi) (y)\right|^2 \,{\rm d} y\\
&  \leq C \ep^\alpha\int_{B_{2\delta_0}(0)\setminus T_{n,1}^0} |\nabla_y \tilde \varphi|^2 \,{\rm d}y \\
& = C  \int_{B_{2\delta_0\ep^\alpha}(x_{n,\ep})\setminus T_{n,\ep}} |\nabla_x \varphi|^2 \,{\rm d}x.
\end{align*}
The third property in \eqref{pt-tildeS-3} yields
$$
\|E_\ep \varphi\|_{L^q(T_{n,\ep})} \leq C \|\varphi\|_{L^q(B_{2\delta_0\ep^\alpha}(x_{n,\ep})\setminus T_{n,\ep})}, \quad \forall\, 1\leq q \leq \infty.
$$
 To obtain the extension from $W^{1,2}(\Omega_\ep)$ to $W^{1,2}(\Omega)$, we simply sum the extensions for $n=1$ to $N(\ep)$.

 \medskip
 
 To finish the proof, we assume that the function $\varphi$ is nonnegative. It is sufficient to modify the construction by taking
$$
\tilde E_\ep \varphi := \max\{0,E_\ep \varphi\},
$$
and recall that
$$
\|\nabla \tilde E_\ep \varphi\|_{L^2(B_{2\delta_0\ep^\alpha}(x_{n,\ep}))} \leq \|\nabla E_\ep \varphi\|_{L^2(B_{2\delta_0\ep^\alpha}(x_{n,\ep}))}
$$
and
$$
\|\tilde E_\ep \varphi\|_{L^q(B_{2\delta_0\ep^\alpha}(x_{n,\ep}))} \leq \|E_\ep \varphi\|_{L^q(B_{2\delta_0\ep^\alpha}(x_{n,\ep}))}, \quad \forall\, 1\leq q\leq\infty.
$$

\end{proof}

\begin{Remark}
Indeed, in the previous lemma we can replace the $L^2$ norm of the gradient by an arbitrary $L^p$ norm with $1\leq p\leq \infty$,  as well as instead of three space dimensions we can work in $\R^d$, $d\geq 2$.  However, we do not need all these generalizations in this paper.
\end{Remark}

We apply this extension $\tilde E_\e$ on $\vt_\ep$ and we have the following result:
\begin{Lemma} \label{l_ext_temp}
The extended temperature $\tilde E_\ep \vt_\ep$ is uniformly with respect to $\ep$ bounded in $W^{1,2}(\Omega)$ and in $L^{3m}(\Omega)$.
\end{Lemma}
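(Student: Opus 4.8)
The plan is to transfer uniform bounds for $\vte$ and $\vte^{\frac m2}$ on the perforated domain $\Omega_\ep$ to the fixed, uniformly smooth domain $\Omega$ by means of the extension $\tilde E_\ep$ of Lemma~\ref{l_ext_CD}. Since each hole $T_{n,\ep}$ sits inside the ball $B_{2\delta_0\ep^\alpha}(x_{n,\ep})$ and these balls are pairwise disjoint (being contained in the disjoint $B_{\delta_2\ep}(x_{n,\ep})$), summing the local estimates of Lemma~\ref{l_ext_CD} over $n$ gives, with $C$ independent of $\ep$,
\begin{equation*}
\|\nabla\tilde E_\ep g\|_{L^2(\Omega)}\le C\|\nabla g\|_{L^2(\Omega_\ep)},\qquad \|\tilde E_\ep g\|_{L^q(\Omega)}\le C\|g\|_{L^q(\Omega_\ep)}\quad(1\le q\le\infty),
\end{equation*}
for $g=\vte$ and $g=\vte^{\frac m2}$ (both nonnegative and in $W^{1,2}(\Omega_\ep)$ for each fixed $\ep$), and the trace of $\tilde E_\ep g$ on $\partial\Omega$ coincides with that of $g$ because the holes do not meet $\partial\Omega$. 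Hence it suffices to prove, uniformly in $\ep$, the two bounds $\|\nabla\vte\|_{L^2(\Omega_\ep)}\le C$ and $\|\vte\|_{L^{3m}(\Omega_\ep)}\le C$; the $W^{1,2}(\Omega)$ bound then follows at once, since $\Omega$ is bounded and $3m>2$ give $\|\tilde E_\ep\vte\|_{L^2(\Omega)}\le C\|\tilde E_\ep\vte\|_{L^{3m}(\Omega)}\le C\|\vte\|_{L^{3m}(\Omega_\ep)}$.

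For the gradient bound I would use the uniform estimates $\|\nabla\log\vte\|_{L^2(\Omega_\ep)}\le C$ and $\|\nabla\vte^{\frac m2}\|_{L^2(\Omega_\ep)}\le C$ from \eqref{b3}: splitting $\Omega_\ep$ into $\{\vte<1\}$ and $\{\vte\ge1\}$ and writing $\nabla\vte=\vte\,\nabla\log\vte$ on the former (so $|\nabla\vte|\le|\nabla\log\vte|$) and $\nabla\vte=\tfrac2m\vte^{1-\frac m2}\nabla\vte^{\frac m2}$ on the latter (so $|\nabla\vte|\le\tfrac2m|\nabla\vte^{\frac m2}|$, using $m>2$ so that $\vte^{1-\frac m2}\le1$ there) yields $\|\nabla\vte\|_{L^2(\Omega_\ep)}\le C$.

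The $L^{3m}(\Omega_\ep)$ bound is the heart of the matter, and this is exactly where the extension is indispensable. Applying the Sobolev inequality together with the Poincaré inequality with boundary mean on the fixed $C^2$ domain $\Omega$ — whose constants are $\ep$-independent — to $\tilde E_\ep\vte^{\frac m2}$ gives
\begin{equation*}
\|\vte\|_{L^{3m}(\Omega_\ep)}^{\frac m2}=\|\vte^{\frac m2}\|_{L^6(\Omega_\ep)}\le\|\tilde E_\ep\vte^{\frac m2}\|_{L^6(\Omega)}\le C\big(\|\nabla\tilde E_\ep\vte^{\frac m2}\|_{L^2(\Omega)}+\|\vte^{\frac m2}\|_{L^1(\partial\Omega)}\big)\le C\big(1+\|\vte^{\frac m2}\|_{L^1(\partial\Omega)}\big),
\end{equation*}
using the gradient bound just established. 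The surface term $\|\vte^{\frac m2}\|_{L^1(\partial\Omega)}=\|\vte\|_{L^{\frac m2}(\partial\Omega)}^{\frac m2}$ is treated by interpolating $L^{\frac m2}(\partial\Omega)$ between $L^1(\partial\Omega)$, controlled through the total energy balance by \eqref{b2}--\eqref{b3} as $\|\vte\|_{L^1(\partial\Omega)}\le C(1+\|\vre\|_{L^{6/5}(\Omega_\ep)})$, and $L^{2m}(\partial\Omega)$, controlled by the trace embedding $W^{1,2}(\Omega)\hookrightarrow L^4(\partial\Omega)$ applied to $\tilde E_\ep\vte^{\frac m2}$ together with $\|\tilde E_\ep\vte^{\frac m2}\|_{W^{1,2}(\Omega)}\le C(1+\|\vte\|_{L^{3m}(\Omega_\ep)}^{\frac m2})$. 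Because $m>2$ makes the resulting power of $\|\vte\|_{L^{3m}(\Omega_\ep)}$ strictly smaller than $\frac m2$, a suitable Young inequality absorbs it into the left-hand side and leaves
\begin{equation*}
\|\vte\|_{L^{3m}(\Omega_\ep)}\le C\big(1+\|\vre\|_{L^{6/5}(\Omega_\ep)}\big).
\end{equation*}

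The remaining, and main, obstacle is to remove the density from the right-hand side, which forces a coupled bootstrap with the density estimate. Revisiting the proof of Lemma~\ref{l_est_dens} (test \eqref{w_mom} with $\vcg{\varphi}=\mathcal{B}_\ep(\vre^\Theta-\langle\vre^\Theta\rangle)$ and use Theorem~\ref{t_DFL}; the hypotheses $\gamma>2$, $m>2$, $\alpha>\max\{\frac{2\gamma-3}{\gamma-2},\frac{3m-2}{m-2}\}$ are precisely what keep the relevant Bogovskii constants $\ep$-independent), one extracts the conditional bound $\|\vre\|_{L^{\gamma+\Theta}(\Omega_\ep)}\le C(1+\|\vte\|_{L^{3m}(\Omega_\ep)})^{\beta}$ with some $\beta<1$ depending only on $\gamma,m$, whence, interpolating with $\|\vre\|_{L^1(\Omega_\ep)}=M_\ep\le M_1$, also $\|\vre\|_{L^{6/5}(\Omega_\ep)}\le C(1+\|\vte\|_{L^{3m}(\Omega_\ep)})^{\beta'}$ with $\beta'<1$. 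Substituting into the previous display produces the self-improving inequality $\|\vte\|_{L^{3m}(\Omega_\ep)}\le C(1+\|\vte\|_{L^{3m}(\Omega_\ep)})^{\beta'}$; since for each fixed $\ep$ the quantity $\|\vte\|_{L^{3m}(\Omega_\ep)}$ is finite (for fixed $\ep$, $\Omega_\ep$ is Lipschitz and $\vte^{\frac m2}\in W^{1,2}(\Omega_\ep)$, so Sobolev applies), the condition $\beta'<1$ forces $\|\vte\|_{L^{3m}(\Omega_\ep)}\le C$ uniformly. Combined with the gradient bound and the extension estimates, this gives the claimed uniform bound for $\tilde E_\ep\vte$ in $W^{1,2}(\Omega)\cap L^{3m}(\Omega)$. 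The delicate point throughout is this coupling: the entropy inequality alone controls only gradients of $\vte^{\frac m2}$ and $\log\vte$ (and $\vte^{-1}$ on $\partial\Omega$), so the missing zeroth-order control on $\vte$ must be imported from the energy balance at the price of a density norm, and the loop closes only because the composite interpolation exponent stays below $1$.
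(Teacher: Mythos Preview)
Your argument is correct and, in one respect, more careful than the paper's. Both proofs rest on the same core idea: extend $\vte$ (respectively $\vte^{m/2}$) to $\Omega$ via Lemma~\ref{l_ext_CD} and use the $\ep$-independent Sobolev/Poincar\'e inequality on the fixed domain $\Omega$, together with the boundary control coming from the energy balance. The difference lies in how the missing zeroth-order information on the boundary is supplied.

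The paper proceeds by a bootstrap in $m$: it first uses the $L^1(\partial\Omega)$ bound on $\vte$ from \eqref{b2}--\eqref{b3} together with the $L^2$ gradient bound (obtained directly from \eqref{b1} via the pointwise inequality $\vte^2\le C(1+\vte^m)$) to get $\tilde E_\ep\vte$ uniformly in $L^6(\Omega)$; for $m\le 12$ this already yields $\vte^{m/2}\in L^1(\Omega_\ep)$, hence $\vte^{m/2}\in W^{1,2}(\Omega_\ep)$ uniformly, and one more application of the extension and Sobolev gives the $L^{3m}$ bound; for $m>12$ one iterates. As written, however, the paper takes for granted that $\|\vte\|_{L^1(\partial\Omega)}$ is uniformly controlled, whereas \eqref{b3} only gives this in terms of $\|\vre\|_{L^{6/5}(\Omega_\ep)}$, so the argument tacitly relies on the density estimate of Lemma~\ref{l_est_dens}, which in turn was stated conditionally on the very $L^{3m}$ bound being proved.

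Your route confronts this circularity head-on: you interpolate $\|\vte\|_{L^{m/2}(\partial\Omega)}$ between $L^1(\partial\Omega)$ (controlled by $\|\vre\|_{L^{6/5}}$) and $L^{2m}(\partial\Omega)$ (controlled, via the trace $W^{1,2}(\Omega)\hookrightarrow L^4(\partial\Omega)$, by $1+\|\vte\|_{L^{3m}(\Omega_\ep)}^{m/2}$), then feed back the conditional density estimate extracted from the proof of Lemma~\ref{l_est_dens}. Since the temperature enters that proof only linearly (through the $\tn{S}$ and pressure terms) and the closing step gives $\|\vre\|_{L^{\gamma+\Theta}}^{\gamma+\Theta}\le C(1+\|\vte\|_{L^{3m}})\,\|\vre\|_{L^{\gamma+\Theta}}^{\lambda}$ with $\gamma+\Theta-\lambda\ge\gamma-1>1$, one indeed obtains $\|\vre\|_{L^{6/5}}\le C(1+\|\vte\|_{L^{3m}})^{\beta'}$ with $\beta'<1$, and the self-improving inequality closes. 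This is a genuinely different decomposition: it trades the paper's iteration in $m$ for an explicit exponent bookkeeping across the coupled density--temperature system, and in doing so makes rigorous a step the paper leaves implicit.
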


\begin{proof}First of all, since $\vte \in W^{1,2}(\Omega_\e)$ and $\vte > 0 $ a.e. in $\Omega_\e$, we have $\tilde E_\ep \vt_\ep\in W^{1,2}(\Omega)$ and $\tilde E_\e \vte \geq 0 $ a.e. in $\Omega$. The point is to have uniform control of the norms.

Recall that $m>2$.  By the fact that $\vte^2 \leq C(1+\vte^m) $ and by \eqref{b1}, we have $\nabla \vt_\ep$ bounded in $L^2(\Omega_\ep)$. By Lemma \ref{l_ext_CD},  the extended function $\tilde E_\ep \vt_\ep$ has the gradient controlled uniformly in $L^2(\Omega_\ep)$ and the function itself belongs to $W^{1,2}(\Omega)$. Moreover, as we control the $L^1$ norm of $\vt_\ep$ over the boundary $\partial \Omega$ and this value is not influenced by the extension, we in fact know that $\tilde E_\ep \vt_\ep$ has uniformly controlled $L^6$ norm over $\Omega$. Therefore, the $L^6$ norm over $\Omega_\ep$ is also bounded uniformly with respect to $\ep$, because $\tilde E_\e \vte$ coincides with $\vte$ in $\Omega_\e$.

 Assume for a moment that $m\leq 12$ where we already have $\vte^{\frac m2}$ bounded in $L^1(\Omega_\e)$. Together with \eqref{b2}, we have $\vte^\frac{m}{2}$ uniformly bounded in $W^{1,2}(\Omega_\e)$.  Then we apply the extension from Lemma \ref{l_ext_CD} on $\vt^{\frac m2}$. Exactly as above it yields that the $L^{3m}$ norm of the extension over the whole $\Omega$ is bounded uniformly with respect to $\ep$. This means also the $L^{3m}$ norm of $\vt_\ep$ must be bounded uniformly with respect to $\ep$ over $\Omega_\ep$. This allows us to apply the claim on the estimate of the $L^q$ norm of the extension from Lemma \ref{l_ext_CD} with $q=3m$ for $\vte$.\footnote{Remark that for the extensions of $\vte$ and $\vte^{\frac m2}$ in general $(\tilde E_\ep\vte)^{\frac m2} \neq \tilde E_\ep \vte^{\frac m2}$ in $\Omega \setminus\Omega_\ep$.}

 For $m>12$, we proceed by induction. From the previous step, we have $\vte$ uniformly bounded in $L^{36}(\Omega_\e)$. Simply copying the argument of the proof of the case $2<m\leq 12$, we can cover all $2< m \leq 36$. Then we can go further and cover all $m>2$.

\end{proof}

The last information we need is a version of the trace theorem. Indeed, in a fixed domain, the trace of $\vte$ belongs to $L^{2m}(\partial\Omega_\ep)$. The question is whether we can control its norm uniformly with respect to $\ep$.  The following lemma gives a quantitative estimate on each $\d T_{n,\e}$.

\begin{Lemma} \label{trace}
Under the assumptions stated in Theorem \ref{t_main} there holds
$$
\|\vte\|_{L^{2m}(\partial T_{n,\ep})}^{2m} \leq C \big(\|\nabla |\vte|^{\frac m2}\|_{L^2(B_{2\delta_0\ep^\alpha}(x_{n,\ep})\setminus T_{n,\ep})}^{2} + \|\vte\|_{L^{3m}(B_{2\delta_0\ep^\alpha}(x_{n,\ep})\setminus T_{n,\ep})}^{3m}+ \|\vte\|_{L^{3m}(B_{2\delta_0\ep^\alpha}(x_{n,\ep})\setminus T_{n,\ep})}^{2m}\big),
$$
where the constant $C$ is independent of $\ep$ and $n$.
\end{Lemma}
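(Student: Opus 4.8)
\medskip

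\noindent\textbf{Proof proposal.} Set $w:=\vte^{m/2}$. Since $\vte>0$, $w$ is nonnegative, and $w\in W^{1,2}(\Om_\e)$ is built into the notion of a weak entropy solution, so in particular $w\in W^{1,2}(D_{n,\e})$, where I abbreviate $D_{n,\e}:=B_{2\delta_0\e^\alpha}(x_{n,\e})\setminus T_{n,\e}$. Using $\vte^{2m}=w^4$, $|\vte|^{m/2}=w$, $\vte^{3m}=w^6$ and $\|\vte\|_{L^{3m}(D_{n,\e})}^{2m}=\|w\|_{L^6(D_{n,\e})}^4$, the assertion of the lemma is equivalent to
\[
\|w\|_{L^4(\partial T_{n,\e})}^4\le C\Big(\|\nabla w\|_{L^2(D_{n,\e})}^2+\|w\|_{L^6(D_{n,\e})}^6+\|w\|_{L^6(D_{n,\e})}^4\Big)
\]
with $C$ independent of $\e$ and $n$. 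The plan is to reduce this to the following \emph{scale-invariant} ``pre-Young'' trace estimate on the rescaled reference configuration $D=D_n:=B_{2\delta_0}(0)\setminus T_{n,1}^0$, $T:=T_{n,1}^0$: for all $v\in W^{1,2}(D)$,
\[
\|v\|_{L^4(\partial T)}^4\le C\Big(\|\nabla v\|_{L^2(D)}\,\|v\|_{L^6(D)}^3+\|v\|_{L^6(D)}^4\Big),
\]
with $C$ independent of $n$. Indeed, under the dilation $x=x_{n,\e}+\e^\alpha y$ a surface integral of a fourth power scales by $\e^{2\alpha}$, while the $L^2$-norm of a gradient and the $L^6$-norm each scale by $\e^{\alpha/2}$; hence every one of the three terms of the pre-Young estimate carries the same overall factor $\e^{2\alpha}$, so once it is established on $D$ it holds verbatim on each $D_{n,\e}$. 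Taking $v=w$ there, rewriting the mixed term as $\|\nabla|\vte|^{m/2}\|_{L^2(D_{n,\e})}\,\|\vte\|_{L^{3m}(D_{n,\e})}^{3m/2}$ and applying Young's inequality $ab\le\tfrac12 a^2+\tfrac12 b^2$ produces precisely the displayed inequality, and thus the lemma.

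\medskip

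\noindent To prove the pre-Young estimate I would argue by the divergence theorem. First fix a vector field $V=V_n\in C^1(\overline D;\R^3)$ with $V\cdot\vc{n}\ge c_0>0$ on $\partial D$, where $\vc{n}$ is the outward unit normal of $D$, and $\|V\|_{C^1(\overline D)}\le C$, the constants $c_0$ and $C$ being independent of $n$. Such a field exists: near the outer sphere $\partial B_{2\delta_0}(0)$ take the radial field $y\mapsto y$ (there $y\cdot\vc{n}=2\delta_0$); near the inner boundary $\partial T_{n,1}^0$ take $-\nabla\rho_n$, where $\rho_n$ is the signed distance to $\partial T_{n,1}^0$, positive in $D$, which is of class $C^2$ with uniformly bounded $C^2$-norm on a tubular neighbourhood of a width independent of $n$, precisely because the holes $\{T_{n,1}^0\}$ are uniformly $C^2$; glue the two fields by a smooth partition of unity, the two boundary components of $D$ being separated by the fixed gap coming from $T_{n,1}^0\subset B_{\delta_0}(0)\subsetneq B_{2\delta_0}(0)$. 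For $v\in C^1(\overline D)$ the field $v^4V$ is $C^1$, so
\[
c_0\int_{\partial D}v^4\,{\rm d}S\le\int_{\partial D}v^4\,(V\cdot\vc{n})\,{\rm d}S=\int_D{\rm div}(v^4V)\,{\rm d}y=\int_D\big(4v^3\,\nabla v\cdot V+v^4\,{\rm div}\,V\big)\,{\rm d}y;
\]
by Hölder's inequality the last integral is $\le C\big(\|v\|_{L^6(D)}^3\|\nabla v\|_{L^2(D)}+\|v\|_{L^4(D)}^4\big)\le C\big(\|v\|_{L^6(D)}^3\|\nabla v\|_{L^2(D)}+\|v\|_{L^6(D)}^4\big)$, where the last step uses $|D|\le|B_{2\delta_0}(0)|$. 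Since $\partial T_{n,1}^0\subset\partial D$, this is the pre-Young estimate for smooth $v$, and it extends to all $v\in W^{1,2}(D)$ by density of $C^\infty(\overline D)$ in $W^{1,2}(D)$, using the Sobolev embedding $W^{1,2}(D)\hookrightarrow L^6(D)$ and the boundedness of the trace operator $W^{1,2}(D)\to L^4(\partial D)$ — both with constants uniform in $n$ by the uniform $C^2$ character of $D$ — to pass to the limit in all three terms.

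\medskip

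\noindent\textbf{Main obstacle.} The essential point is to keep the estimate \emph{linear} in $\nabla w$: the naive trace inequality $\|v\|_{L^4(\partial D)}\le C\|v\|_{W^{1,2}(D)}$ would, after rescaling, contribute $\|\nabla|\vte|^{m/2}\|_{L^2(D_{n,\e})}^4$ rather than the second power required by the statement, and the Gauss--Green identity applied to $v^4V$ is exactly what lowers the power of the gradient from $4$ to $2$. The second — and only other — delicate point is that the auxiliary vector field $V$, and hence the constant, must be chosen uniformly in $n$; this is where the hypothesis that $\{T_{n,1}^0\}$ are uniformly $C^2$ is genuinely used, via signed distance functions with uniform $C^2$-bounds on tubular neighbourhoods of uniform width. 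Everything else — the scaling bookkeeping, Hölder's and Young's inequalities, and the density argument — is routine.
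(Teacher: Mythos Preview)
Your proof is correct and follows essentially the same strategy as the paper: reduce the boundary integral of $\vte^{2m}$ to a volume integral carrying a single gradient of $\vte^{m/2}$, then apply H\"older and Young. The only cosmetic differences are that the paper works directly at scale $\ep$ with a cut-off $\varphi_\ep$ (so the $\ep^{-\alpha}$ from $|\nabla\varphi_\ep|$ is cancelled by the volume factor in H\"older) rather than rescaling to a reference domain, and it invokes the ``standard proof of the trace theorem'' in place of your explicit divergence-theorem argument with the vector field $V$; the resulting intermediate inequality and the final Young step are identical.
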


\begin{proof}
Recalling the standard proof of the trace theorem for Sobolev functions (see, e.g., \cite{Evans_book}),  one can arrive at (by partition of unity and smooth approximation) the following inequality
$$
\int_{\partial T_{n,\ep}} |\vte|^{2m} \,{\rm d}S \leq C\int_{B_{2\delta_0\ep^\alpha}(x_{n,\ep})\setminus T_{n,\ep})}\big| \nabla (\varphi_\ep |\vte|^{2m})\big| \dx,
$$
where the function $\varphi_\ep$ is a non-negative and  smooth cut-off function which equals to 1 on $\partial T_{n,\ep}$ and vanishes near $\partial B_{2\delta_0 \ep^\alpha}(x_{n,\ep})$, hence its gradient is bounded by $C\ep^{-\alpha}$.
Recall that we used the fact that the domain $T_{n,\ep}$ is close to a ball with diameter $\ep^{\alpha}$, uniformly with respect to $\ep$.
To finish the proof, we need to estimate the right hand-side of the inequality. We calculate
\begin{align*}
&\int_{B_{2\delta_0\ep^\alpha}(x_{n,\ep})\setminus T_{n,\ep})}\big|\nabla (\varphi_\ep |\vte|^{2m})\big|  \dx \\
&\quad \leq  \int_{B_{2\delta_0\ep^\alpha}(x_{n,\ep})\setminus T_{n,\ep})}|\nabla \varphi_\ep| |\vte|^{2m} \dx + \int_{B_{2\delta_0\ep^\alpha}(x_{n,\ep})\setminus T_{n,\ep}}\varphi_\ep \big|\nabla |\vte|^{2m}\big| \dx \\
&\quad \leq  C \ep^{-\alpha} \int_{B_{2\delta_0\ep^\alpha}(x_{n,\ep})\setminus T_{n,\ep})} |\vte|^{2m} \dx + C \int_{B_{2\delta_0\ep^\alpha}(x_{n,\ep})\setminus T_{n,\ep})}\big|\nabla |\vte|^{\frac m2}\big| |\vte|^{\frac{3m}{2}} \dx \\
& \quad \leq  C \Big(\int_{B_{2\delta_0\ep^\alpha}(x_{n,\ep})\setminus T_{n,\ep})} |\vte|^{3m} \dx\Big)^{\frac 23} \\
& \qquad+ C \Big(\int_{B_{2\delta_0\ep^\alpha}(x_{n,\ep})\setminus T_{n,\ep})}\big|\nabla |\vte|^{\frac m2}\big|^2 \dx\Big)^{\frac 12} \Big(\int_{B_{2\delta_0\ep^\alpha}(x_{n,\ep})\setminus T_{n,\ep})}|\vte|^{3m} \dx\Big)^{\frac 12}.
\end{align*}
This implies that
\ba\label{trace-theta-2}
&\int_{\partial T_{n,\ep}} |\vte|^{2m} \,{\rm d}S  \leq  C \Big(\int_{B_{2\delta_0\ep^\alpha}(x_{n,\ep})\setminus T_{n,\ep})} |\vte|^{3m} \dx\Big)^{\frac 23} \\
& \quad + C \int_{B_{2\delta_0\ep^\alpha}(x_{n,\ep})\setminus T_{n,\ep})}\big|\nabla |\vte|^{\frac m2}\big|^2 \dx + C\int_{B_{2\delta_0\ep^\alpha}(x_{n,\ep})\setminus T_{n,\ep})}|\vte|^{3m} \dx,
\ea
which leads to the desired inequality.

\end{proof}

A direct corollary from Lemma \ref{trace} is the following trace estimate on the whole boundary of the holes. We will see that this estimate is not uniform bounded in $\e$. However, we obtain an explicit dependency on $\e$. This will be needed later when passing limit in the energy balance equation.
\begin{Corollary}\label{cor-trace}Under the assumptions stated in Theorem \ref{t_main} there holds
$$\|\vte\|_{L^{2m}(\cup_{n=1}^{N(\ep)} \d T_{n,\ep})} \leq C \e^{-\frac{1}{2m}}.$$
\end{Corollary}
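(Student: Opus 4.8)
The plan is to sum the local trace bound of Lemma~\ref{trace} over $n=1,\dots,N(\ep)$ and to exploit that the annuli $B_{2\delta_0\ep^\alpha}(x_{n,\ep})\setminus T_{n,\ep}$ are pairwise disjoint and contained in $\Om_\ep$. This disjointness is immediate from \eqref{ass-holes}: each such annulus lies inside the ball $B_{\delta_2\ep}(x_{n,\ep})$, these larger balls are pairwise disjoint by hypothesis, and each annulus avoids its hole $T_{n,\ep}$ and hence sits in $\Om_\ep$. Consequently, for any nonnegative $g\in L^1(\Om_\ep)$,
$$
\sum_{n=1}^{N(\ep)} \int_{B_{2\delta_0\ep^\alpha}(x_{n,\ep})\setminus T_{n,\ep}} g\,\dx \le \int_{\Om_\ep} g\,\dx .
$$

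First I would apply this with $g=\big|\nabla|\vte|^{m/2}\big|^2$ and with $g=|\vte|^{3m}$; combined with the uniform bounds $\|\nabla|\vte|^{m/2}\|_{L^2(\Om_\ep)}\le C$ from the entropy estimates of Section~\ref{s_ub} and $\|\vte\|_{L^{3m}(\Om_\ep)}\le C$ from Lemma~\ref{l_ext_temp}, the sums over $n$ of the first two terms on the right-hand side of Lemma~\ref{trace} are bounded by a constant independent of $\ep$. The remaining term carries the exponent $2m<3m$ and is not sub-additive in this direct way. Setting $a_n:=\|\vte\|_{L^{3m}(B_{2\delta_0\ep^\alpha}(x_{n,\ep})\setminus T_{n,\ep})}^{3m}$, I would use Hölder's inequality on the counting sum with exponents $\tfrac32$ and $3$:
$$
\sum_{n=1}^{N(\ep)} a_n^{2/3} \le \Big(\sum_{n=1}^{N(\ep)} a_n\Big)^{2/3} N(\ep)^{1/3} \le C\, N(\ep)^{1/3},
$$
and since the disjoint balls $B_{\delta_2\ep}(x_{n,\ep})$ all lie in the bounded domain $\Om$ we have $N(\ep)\le C\ep^{-3}$, so this contribution is at most $C\ep^{-1}$.

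Collecting the three pieces gives
$$
\|\vte\|_{L^{2m}(\cup_{n=1}^{N(\ep)}\d T_{n,\ep})}^{2m} = \sum_{n=1}^{N(\ep)}\|\vte\|_{L^{2m}(\d T_{n,\ep})}^{2m} \le C\big(1+\ep^{-1}\big)\le C\ep^{-1},
$$
and the claim follows after taking the $2m$-th root. The only genuinely non-routine point is this final step: the exponent $2m$ forces a Hölder split over the $N(\ep)$ holes and hence the loss of the factor $N(\ep)^{1/3}\sim\ep^{-1}$, which is precisely the origin of the non-uniform constant $\ep^{-1/(2m)}$; the rest is bookkeeping with bounds already established.
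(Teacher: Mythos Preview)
Your proof is correct and follows essentially the same route as the paper's: sum the local estimate of Lemma~\ref{trace} over $n$, use disjointness of the annuli to control the additive terms by the uniform bounds on $\|\nabla|\vte|^{m/2}\|_{L^2(\Om_\ep)}$ and $\|\vte\|_{L^{3m}(\Om_\ep)}$, and handle the non-additive term $\sum_n a_n^{2/3}$ via H\"older on the counting measure, which produces the factor $N(\ep)^{1/3}\le C\ep^{-1}$.
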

\begin{proof}
By Lemma \ref{trace} (by \eqref{trace-theta-2} specifically), we have
\begin{align*}
\int_{\cup_{n=1}^{N(\ep)}\d T_{n,\ep}} |\vte|^{2m} \,{\rm d}S   & =\sum_{n=1}^{N(\ep)} \int_{\d T_{n,\ep}} |\vte|^{2m} \,{\rm d}S \\
 & \leq  C \sum_{n=1}^{N(\ep)}  \Big(\int_{B_{2\delta_0\ep^\alpha}(x_{n,\ep})\setminus T_{n,\ep})} |\vte|^{3m} \dx\Big)^{\frac 23} \\
& \quad + C \sum_{n=1}^{N(\ep)} \int_{B_{2\delta_0\ep^\alpha}(x_{n,\ep})\setminus T_{n,\ep})}\big|\nabla |\vte|^{\frac m2}\big|^2 \dx + C \sum_{n=1}^{N(\ep)}\int_{B_{2\delta_0\ep^\alpha}(x_{n,\ep})\setminus T_{n,\ep})}|\vte|^{3m} \dx\\
& \leq C \left(\sum_{n=1}^{N(\ep)}  \Big(\int_{B_{2\delta_0\ep^\alpha}(x_{n,\ep})\setminus T_{n,\ep})} |\vte|^{3m} \dx\Big)\right)^{\frac{2}{3}} \left(\sum_{n=1}^{N(\ep)}  1 \right)^{\frac{1}{3}} \\
&\quad + C  \int_{\Omega_\e}\big|\nabla |\vte|^{\frac m2}\big|^2 \dx + C \int_{\Omega_\e}|\vte|^{3m} \dx\\
& \leq C \e^{-1},
\end{align*}
where we used the boundedness of $\|\nabla |\vte|^{\frac m2}\|_{L^2(\Omega_\e)}$ and $\|\vte\|_{L^{3m}(\Omega)}$.  Our desired result follows immediately.

\end{proof}

To summarize, starting from the solution sequence $\vre, \vue, \vte$, we find a sequence of extension of functions, still denoted by $\vre, \vue, \vte$, such that the extensions are defined on the whole $\Omega$, are equal to the original functions on $\Omega_\ep$, $\vre$ and $\vue$ are zero in $\Omega \setminus \Omega_\ep$, $\vte\geq 0$  in $\Omega$, and they satisfy the following uniform bounds with respect to $\ep$
\begin{equation}\label{b7}
\begin{aligned}
\|\vue\|_{W^{1,2}(\Omega)} \leq C, \quad \|\vre\|_{L^{\gamma +\Theta}(\Omega)} \leq C, \quad \|\vte\|_{W^{1,2}(\Omega)} + \|\vte\|_{L^{3m}(\Omega)} \leq C,
\end{aligned}
\end{equation}
where $\Theta$ is from Lemma \ref{l_est_dens}. Moreover, $\vte$ has well defined trace on $\partial T_{n,\ep}$ and Lemma \ref{trace} and Corollary \ref{cor-trace} provide us a control of its norm.

\section{Limit passage}
\label{s_lp}

To conclude, we need to show that, up to a remainder which goes to zero when $\ep \to 0^+$, the functions fulfil the weak formulations of the continuity, momentum and energy equations in the whole $\Omega$. This will be the goal of the following two subsections (for the continuity equations there is nothing to do). To show that the weak limits of the sequences form in fact a weak solution to the steady compressible Navier--Stokes--Fourier system we will have to show the strong convergence of the density sequence. This is, however, nowadays standard in the mathematical fluid mechanics of compressible fluids. Last but not least, we have to check the the limit of the temperatures is in fact positive a.e. in $\Omega$ since the extensions could become zero on a nontrivial set, however, this set is contained in $\bigcup_{n=1}^{N(\ep)} T_{n,\ep}$ which is a set whose measure is of order $O(\ep^{3(\alpha-1)})$ when $\ep \to 0^+$.

\medskip

First of all, from the uniform bound in \eqref{b7}, up to a selection of subsequences, we have the following convergence results:
\ba\label{conv-1}
&\vue \to \ \vu \ \mbox{weakly in} \  W_0^{1,2}(\Omega;\R^3), \quad \vue \to \ \vu \ \mbox{strongly in} \  L^{r}(\Omega;\R^3), \ \mbox{for all} \ 1\leq r < 6,\\
& \vre \to \ \vr \ \mbox{weakly in} \  L^{\gamma + \Theta}(\Omega), \\
&\vte \to \ \vt \ \mbox{weakly in} \  W^{1,2}(\Omega), \quad \vte \to \ \vt \ \mbox{strongly in} \  L^{r}(\Omega), \ \mbox{for all} \ 1\leq r < 3m.
\ea

\subsection{Limit passage in the energy equation}
\label{sub_lpee}

We now want to show that if we plug into the weak formulation of our problem our extended functions, we can in fact write the problem as a weak formulation of the total energy balance on $\Omega$ (but for the functions $(\vre,\vue,\vte)$ extended to $\Omega$) and a small remainder which goes to zero if $\ep \to 0^+$. Moreover, we show that passing with $\ep \to 0^+$, we get the weak formulation of the total energy balance in $\Omega$ for the limit functions $(\vr,\vu,\vt)$. Here we, however, need to show the strong convergence of the sequence of densities which is not obvious, but nowadays standard. This is postponed to the last section. 

Recall that $\vue = \mathbf{0}$ on $\Omega\setminus \Omega_\e$. We then can rewrite the weak formulation of the energy balance as follows:
$$
\begin{aligned}
&-\int_{\Omega} \Big(\vre \big(e(\vre,\vte)+ \frac 12 |\vue|^2\big) \vue  + p(\vre,\vte)\vue - \tn{S}(\vte,\nabla \vue)\vue - \kappa(\vte)\nabla \vte\Big) \cdot \nabla \psi \dx \\
&\quad + \int_{\partial \Omega} L(\vte-\vt_0)\psi \,{\rm d}S - \int_{\Omega} \vre\vc{f}\cdot \vue \psi \dx\\
& = -\int_{\Omega\setminus\Omega_\ep} \kappa(\vte)\nabla\vte\cdot \nabla \psi \dx -  \int_{\cup_{n=1}^{N(\ep)}\d T_{n,\ep}} L(\vte-\vt_0)\psi \,{\rm d}S \\
&=: I_1 + I_2
\end{aligned}
$$
for each $\psi \in C^1(\overline{\Omega})$. Let us show that both integrals on the right hand-side disappear when $\ep \to 0^+$. By H\"older's inequality, we have, as $\ep \to 0^+$,
$$
|I_1| \leq C \|\nabla \psi\|_{L^\infty} (1+\|\vte\|^m_{L^{3m}(\Omega\setminus\Omega_\ep)}) \|\nabla \vte\|_{L^2(\Omega\setminus\Omega_\ep)} |\Omega\setminus\Omega_\ep|^{\frac 16} \to 0.
$$
Using Corollary \ref{cor-trace} and the fact that the sequence $\|\vt_0\|_{L^q(\d \Omega_\e)}$ is bounded with respect to $\e$ for some $q>1$, together with the fact that $\a>3$ and $m>2$,  we have
\begin{align*}
|I_2| & \leq C \Big(\|\vte\|_{L^{2m}(\cup_{n=1}^{N(\ep)} \d T_{n,\ep})} \Big|\bigcup_{n=1}^{N(\ep)} \d T_{n,\ep}\Big|^{\frac{2m-1}{2m}} + \|\vt_0\|_{{L^{q}(\cup_{n=1}^{N(\ep)}\d T_{n,\ep})}} \Big|\bigcup_{n=1}^{N(\ep)} \d T_{n,\ep}\Big|^{\frac{q-1}{q}}\Big)\\
& \leq C \e^{-\frac{1}{2m}} \e^{(2\a -3)\frac{2m-1}{2m}} + C \e^{(2\a -3)\frac{q-1}{q}} \\
& \leq C \e^{\frac{(2m-1)(2\a - 3)-1}{2m}} + C  \e^{(2\a -3)\frac{q-1}{q}}  \to 0, \quad \mbox{as $\e \to 0^+.$}
\end{align*}
Hence, passing to the limit on the left hand-side, recalling that $\vue\to \vu$ strongly in $L^r(\Omega;\R^3)$ with all $1\leq r <6$ and $\vte\to \vt$ strongly in $L^r(\Omega)$ with all $1\leq r <3m$, we get (recall that as $\alpha >\frac{3m-2}{m-2}$, we know $(2m-1)(2\alpha-3)>1$)
\begin{equation} \label{w_ener_2}
-\int_{\Omega} \Big(\big(\overline{\vr e (\vr,\vt)}+ \vr \frac 12 |\vu|^2\big) \vu  + \overline{p(\vr,\vt)}\vu - \tn{S}(\vt,\nabla \vu)\vu - \kappa(\vt)\nabla \vt\Big) \cdot \nabla \psi \dx + \int_{\partial \Omega} L(\vt-\vt_0)\psi \,{\rm d}S = \int_{\Omega} \vr\vc{f}\cdot \vu \psi \dx,
\end{equation}
where we used the notation $\overline{g(\vr)}$ being a weak limit of $g(\vre)$ in some suitable $L^r(\Omega)$ space. To conclude that we get the total energy balance for the limit functions we need to show that the sequence of densities $\vre$ converges in fact strongly to $\vr$ at least in $L^1(\Omega)$. This will be the aim of the last subsection.

\medskip

We finish this subsection by the following result.
\begin{Lemma} \label{temp_pos}
The limit temperature $\vt$ is positive a.e. in $\Omega$.
\end{Lemma}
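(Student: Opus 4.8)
The plan is to exploit the bound $\|\nabla\log\vte\|_{L^2(\Omega_\ep)}\le C$ from \eqref{b3}, which says $\log\vte$ lives in $W^{1,2}(\Omega_\ep)$ with a uniform gradient bound, together with the boundary control $\|\vte^{-1}\|_{L^1(\partial\Omega)}\le C$. The key point is that $\log\vte$ can become very negative only on the holes $T_{n,\ep}$, whose total measure is $O(\ep^{3(\alpha-1)})\to 0$, so in the limit the set where $\vt$ could vanish is negligible. Concretely, I would first extend $\log\vte$ (or a suitable truncation of it from below, e.g. $\max\{\log\vte,-k\}$ for fixed $k$) from $\Omega_\ep$ to $\Omega$ using the operator $E_\ep$ of Lemma \ref{l_ext_CD}; this gives a function in $W^{1,2}(\Omega)$ with uniformly bounded gradient, and with $L^1(\partial\Omega)$ control of $\vte^{-1}$ not affected by the extension since the extension preserves values on $\Omega_\ep\supset\partial\Omega$. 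A Poincaré-type inequality (using the boundary term to fix the additive constant, as in the footnote-style estimates leading to \eqref{b3}) then gives a uniform $W^{1,2}(\Omega)$ bound on this extended truncated logarithm.

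Next I would pass to the limit. Up to a subsequence the extended truncated logarithms converge weakly in $W^{1,2}(\Omega)$ and strongly in $L^2(\Omega)$ to some limit; since on $\Omega_\ep$ they agree with $\max\{\log\vte,-k\}$ and $\vte\to\vt$ a.e. in $\Omega$ (from the strong $L^r$ convergence in \eqref{conv-1}) after a further subsequence, and since $|\Omega\setminus\Omega_\ep|\to 0$, the limit must equal $\max\{\log\vt,-k\}$ a.e. in $\Omega$. In particular $\max\{\log\vt,-k\}\in W^{1,2}(\Omega)$ with a bound independent of $k$. This already rules out $\vt=0$ on a set of positive measure in any crude sense; to finish I would either let $k\to\infty$ to conclude $\log\vt\in W^{1,2}(\Omega)$ (hence finite a.e., so $\vt>0$ a.e.), or, more robustly, argue directly: if $\vt=0$ on a set $A$ of positive measure, then $\max\{\log\vt,-k\}\le -k$ on $A$, so $\|\max\{\log\vt,-k\}\|_{L^2(\Omega)}^2\ge k^2|A|\to\infty$, contradicting the uniform-in-$k$ bound. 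Alternatively one uses that the boundary integral $\int_{\partial\Omega}\vt^{-1}\,{\rm d}S$ stays finite in the limit by weak lower semicontinuity, which pins the constant in Poincaré and prevents the logarithm from escaping to $-\infty$.

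The step I expect to be the main obstacle is justifying that the weak limit of the extended functions is genuinely $\max\{\log\vt,-k\}$ (or $\log\vt$) rather than something larger: the extension operator $\tilde E_\ep$ or $E_\ep$ alters values on $\bigcup_n T_{n,\ep}$ in an uncontrolled pointwise way, so I must rely on the smallness $|\Omega\setminus\Omega_\ep|=O(\ep^{3(\alpha-1)})\to 0$ to show that the discrepancy between the extension and the "true" truncated logarithm tends to zero in $L^2(\Omega)$ — this uses the uniform $W^{1,2}(\Omega)$ bound (hence $L^6(\Omega)$ bound) on both, together with Hölder on the shrinking set. A secondary technical point is that $\log$ is unbounded, so one genuinely needs the truncation at level $-k$ and the monotone limit $k\to\infty$, invoking monotone convergence together with the $k$-independent bound to land back at $\log\vt\in W^{1,2}(\Omega)$, from which positivity a.e. is immediate.
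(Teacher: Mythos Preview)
Your proposal is correct and shares the same starting point as the paper's proof---extend the logarithm of the temperature from $\Omega_\ep$ to $\Omega$ via $E_\ep$, use the gradient bound $\|\nabla\log\vte\|_{L^2(\Omega_\ep)}\le C$ together with the boundary control $\|\vte\|_{L^1(\partial\Omega)}+\|\vte^{-1}\|_{L^1(\partial\Omega)}\le C$ (whence $\|\log\vte\|_{L^1(\partial\Omega)}\le C$) to get a uniform $W^{1,2}(\Omega)$ bound---but the two arguments diverge in how they handle the identification of the limit and the final contradiction.

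The paper extends $\log\vte$ directly (no truncation), obtains a weak limit $z\in W^{1,2}(\Omega)$ with $z>-\infty$ a.e., and then uses a Borel--Cantelli type device: along a subsequence $\ep_l\le 1/l$ the total hole measure is summable, so the set $D_{l_0}=\bigcup_{l\ge l_0}\bigcup_n T_{n,\ep_l}$ has arbitrarily small measure; outside $D_{l_0}$ one has $E_{\ep_l}(\log\vt_{\ep_l})=\log(\tilde E_{\ep_l}\vt_{\ep_l})$ for \emph{all} $l\ge l_0$, hence $\log\vt=z$ a.e.\ there. Your route instead truncates at level $-k$, identifies the limit as $\max\{\log\vt,-k\}$ by a H\"older estimate on the shrinking set $\Omega\setminus\Omega_\ep$ (using the uniform $L^6$ bound), and then runs the clean quantitative contradiction $\|\max\{\log\vt,-k\}\|_{L^2}^2\ge k^2|A|$.

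Each approach buys something: the paper's Borel--Cantelli trick avoids introducing the auxiliary parameter $k$ and the second limit $k\to\infty$, at the price of a slightly delicate subsequence argument; your truncation argument is more elementary and makes the identification step completely transparent, since truncation keeps everything bounded and the discrepancy on the holes is controlled directly. The step you flagged as the main obstacle---showing the weak limit really is $\max\{\log\vt,-k\}$---is indeed the crux, and your proposed resolution via H\"older on $\Omega\setminus\Omega_\ep$ is exactly right.
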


\begin{proof}
We first apply Lemma \ref{l_ext_CD} on the sequence $\log \vte$ (the operator $E_\ep$).\footnote{Recall that in $\Omega\setminus\Omega_\ep$ in general $\log \tilde E_\ep(\vte) \neq E_\ep(\log \vte)$.} Since both $\int_{\partial \Omega} \vte\,{\rm d}S$ and $\int_{\partial \Omega} \vte^{-1}\,{\rm d}S$ are bounded uniformly with respect to $\ep$, we see that the sequence $E_\ep (\log \vte)$ is bounded in $W^{1,2}(\Omega)$ and in particular, up to a subsequence, $E_\ep(\log \vte) \to z$ in $L^r(\Omega)$ for all $1\leq r <6$ and a.e. in $\Omega$.
In particular, $z >-\infty$ a.e. in $\Omega$.

Next, we take a specific sequence of $\ep_l\to 0^+$ such that $\ep_l \leq \frac{1}{l}$ for all $l \in \tn{N}$. Note that  the three-dimensional Lebesgue measure
$$
\Big| \bigcup_{n=1}^{N(\ep_l)} T_{n,\ep_l}\Big| \leq \frac{C}{l^{3(\alpha-1)}},
$$
and since $\alpha >2$, the series
$
\sum_{l=1}^\infty  \frac{1}{l^{3(\alpha-1)}}
$
is convergent. Let us denote for $l_0 \in \tn{N}$
$$
D_{l_0} = \bigcup_{l=l_0}^\infty \bigcup_{n=1}^{N(\ep_l)} T_{n,\ep_l}.
$$
Then for any $\delta>0$ there exists $l_0 \in \tn{N}$ such that the three-dimensional Lebesgue measure of $D_{l_0}$ is smaller than $\delta$.

Let us assume that the limit temperature constructed in \eqref{conv-1} is zero on a set of positive three-dimensional Lebesgue measure, say of measure $\delta_0>0$. We take $l_0$ corresponding to $\delta_0/2$ from the above construction, where we chose a subsequence from $\ep \to 0^+$ such that $\ep_l \leq \frac{1}{l}$ with $l\geq l_0$. Since we know that our sequence of temperatures $\tilde E_\ep \vt_{\ep_l}$ converges strongly in $L^q(\Omega)$ for any $q<3m$ and a.e. in $\Omega$, it also converges a.e. in $\Omega \setminus D_{l_0}$. Hence we know that $\log(\tilde E_\ep \vte)$ converges strongly in $L^q(\Omega \setminus D_{l_0})$ for some $q\geq 1$ and a.e. in $\Omega \setminus D_{l_0}$ to $\log \vt$, e.g., by virtue of Vitali's convergence theorem. But then also $\ln \vt = z >-\infty$ a.e. in $\Omega \setminus D_{l_0}$. This means that the limit temperature $\vt$ could be zero at most on $D_{l_0}$ together with a set of  measure zero. Thus $\vt$ cannot be zero on a set of measure $\delta_0$ which leads to a contradiction.
\end{proof}

\subsection{Limit passage in the continuity and the momentum equation}
\label{sub_lpcme}

First, recall that the continuity equation is satisfied in the weak and renormalized sense \eqref{w_cont} and \eqref{w_cont_ren} for all $\psi\in C_0^1(\R^d) $ with $b\in C^0([0,\infty))\cap C^1((0,\infty))$ satisfying \eqref{b-pt1} and \eqref{b-pt2}. Passing $\e \to 0$ and applying \eqref{conv-1} gives that
\be\label{con-con-1}
\Div(\vr \vu) = 0  \quad \mbox{holds in}\ \mathcal{D}'(\R^3)
\ee
and
$$
 \Div \big(\overline{b(\vr)}\vu\big)+ \overline{\big(\vr b'(\vr) -b(\vr)\big)\Div \vu} =0 \quad \mbox{holds in}\ \mathcal{D}'(\R^3),
$$
where we used the common notation $\overline{g(u)}$ denoting the weak limit of $g(u_n)$ for a nonlinear function $g$.  Moreover, by \eqref{conv-1}, \eqref{con-con-1} and Remark \ref{rem:ren}, we have (recall that $\gamma\geq 2$)
\be\label{con-con-3}
 \Div \big(b(\vr)\vu\big)+\big(\vr b'(\vr) -b(\vr)\big)\Div \vu=0, \quad \mbox{holds in}\ \mathcal{D}'(\R^3),
\ee
for any $b\in C^0([0,\infty))\cap C^1((0,\infty))$ satisfying \eqref{b-pt1} and \eqref{b-pt2}.

\medskip

It is more complicated to deduce a modified momentum system in homogeneous domain $\Omega$, due to the choice of test functions: the original momentum equations are satisfied in $\Omega_\e$ and one should choose $C_c^1(\Ome)$ test function, while our target equations are defined in $\Om$ and one should choose $C_c^1(\Om)$ test functions. We will employ the argument in \cite{DFL} and prove the following lemma:
\begin{Lemma}\label{lem:momentum}
Under the assumptions in  Theorem \ref{t_main},  there holds
 \be\label{eq-monmentum}
\dive(\vr_\e \vu_\e\otimes \vu_\e)+\nabla p(\vr_\e,\vte) - \dive\SSS(\vte,\nabla  \vu_\e) =  \vr_\e \vf +{\bf r}_\e,\quad \mbox{in}\ \mathcal{D}'(\Om),
 \ee
where the distribution ${\bf r}_\e$ is small in the following sense:
\be\label{est-re}
|\langle {\bf r}_\e,\vcg{\varphi} \rangle_{\mathcal{D}'(\Om),\mathcal{D}(\Om)}|\leq C\, \e^{\de_1} \big(\|\nabla \vcg{\varphi}\|_{L^{\frac{3(\g+\Theta)}{2(\g+\Theta)-3}+\delta_0}(\Om)} + \|\vcg{\varphi}\|_{L^{r_1}(\Om)}\big),
\ee
for all $\vcg{\varphi} \in C^\infty_c(\Om;\R^3)$, where $\Theta$ is given by \eqref{Theta},  $\de_0>0$ is chosen such that \eqref{est-Ie1-2} or \eqref{est-Ie1-2-2} is satisfied, $1<r_1<\infty$ is determined by \eqref{def-de-r1} and $\de_1>0$ is defined in \eqref{def-de1} later on.
\end{Lemma}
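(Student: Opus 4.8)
The strategy is the one used in \cite{DFL}: the momentum equation \eqref{w_mom} holds only for test functions $\vcg{\varphi}\in C^1_c(\Om_\e;\R^3)$, so to obtain \eqref{eq-monmentum} for $\vcg{\varphi}\in C^\infty_c(\Om;\R^3)$ we must correct a general $\vcg{\varphi}$ by subtracting a function supported near the holes that makes it vanish on each $T_{n,\e}$. Concretely, for each hole $T_{n,\e}\subset B_{\delta_0\e^\alpha}(x_{n,\e})$ I would fix a cut-off $g_{n,\e}\in C^\infty_c(B_{\delta_1\e}(x_{n,\e}))$ with $g_{n,\e}\equiv 1$ on $B_{2\delta_0\e^\alpha}(x_{n,\e})$, built by scaling a fixed profile so that $\|\nabla g_{n,\e}\|_{L^\infty}\le C\e^{-1}$ and $\|\nabla g_{n,\e}\|_{L^p(\R^3)}\le C\e^{3/p-1}$ (this uses the three-scale separation in \eqref{ass-holes}). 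Then $\vcg{\varphi}-\sum_n g_{n,\e}\vcg{\varphi}$ is an admissible test function in \eqref{w_mom}, except that it is not divergence-controlled on the annuli; as in \cite{DFL} one further corrects it using the Bogovskii operator on each annulus $B_{\delta_1\e}(x_{n,\e})\setminus B_{2\delta_0\e^\alpha}(x_{n,\e})$ (whose Bogovskii norm is uniformly bounded because the annulus, after rescaling by $\e$, is a fixed Lipschitz domain) so that the total correction $\vcg{\phi}_\e$ satisfies $\Div\vcg{\phi}_\e=0$ and is supported in $\bigcup_n B_{\delta_1\e}(x_{n,\e})$. Plugging $\vcg{\varphi}-\vcg{\phi}_\e$ into \eqref{w_mom} yields \eqref{eq-monmentum} with ${\bf r}_\e$ equal to the sum of the four integrals from \eqref{w_mom} evaluated against $\vcg{\phi}_\e$.

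The second step is to estimate $\langle{\bf r}_\e,\vcg{\varphi}\rangle$, i.e.\ to bound each of
$\int \vr_\e\vu_\e\otimes\vu_\e:\nabla\vcg{\phi}_\e$, $\int p(\vr_\e,\vte)\Div\vcg{\phi}_\e$, $\int\SSS(\vte,\nabla\vu_\e):\nabla\vcg{\phi}_\e$ and $\int\vr_\e\vf\cdot\vcg{\phi}_\e$. Here I would use: the uniform bounds \eqref{b7} on $\vr_\e$ in $L^{\gamma+\Theta}(\Om)$, on $\vu_\e$ in $W^{1,2}_0(\Om)\hookrightarrow L^6(\Om)$, on $\vte$ in $W^{1,2}\cap L^{3m}(\Om)$ (so $\SSS(\vte,\nabla\vu_\e)$ is bounded in $L^{6m/(3m+2)}$); the smallness of the support, $\big|\bigcup_n B_{\delta_1\e}(x_{n,\e})\big|\le C\e^{3(\alpha-1)}\cdot\e^{-3}\cdot\e^{3}$... more precisely $N(\e)\sim\e^{-3}$ and each ball has volume $\sim\e^3$, so the total support has measure $O(1)$ but the "hole part" has measure $O(\e^{3(\alpha-1)})$ — the gain actually comes from the $L^p$ smallness of $\nabla\vcg{\phi}_\e$, which via the rescaling of $g_{n,\e}$ and of the Bogovskii correction carries a power $\e^{3(\alpha-1)/p'}$-type factor summed over $N(\e)\sim\e^{-3}$ holes; and Hölder with the exponent $\tfrac{3(\gamma+\Theta)}{2(\gamma+\Theta)-3}+\delta_0$ chosen precisely so that the conjugate exponent pairs against $\vr_\e\vu_\e\otimes\vu_\e\in L^{\frac{3(\gamma+\Theta)}{2(\gamma+\Theta)+3}}$-ish (the convective term is the worst one and dictates the exponent on $\nabla\vcg{\varphi}$ in \eqref{est-re}), and the small parameter $\delta_0$ absorbs the loss from using $L^p$ rather than $L^2$ bounds on the correctors. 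The pressure term needs $\vte\in L^{3m}$ against $\vr_\e$, and the lower-order term $\int\vr_\e\vf\cdot\vcg{\phi}_\e$ is controlled via $\|\vcg{\phi}_\e\|_{L^{r_1}}\le C\|\nabla\vcg{\phi}_\e\|_{L^{\cdot}}$ by Sobolev, fixing $r_1$ through \eqref{def-de-r1}. Collecting the four contributions, each carrying a positive power of $\e$, and taking $\delta_1$ to be the minimum of these powers (this is \eqref{def-de1}) gives \eqref{est-re}.

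The main obstacle is the bookkeeping of exponents: one must verify that the power of $\e$ produced by rescaling the cut-offs and the Bogovskii correctors, after summing over the $N(\e)\sim\e^{-3}$ holes and pairing (via Hölder) against the uniformly bounded nonlinear quantities $\vr_\e\vu_\e\otimes\vu_\e$, $p(\vr_\e,\vte)$, $\SSS(\vte,\nabla\vu_\e)$, $\vr_\e\vf$, is strictly positive for every one of the four terms — and this is exactly where the hypotheses $\alpha>3$, $\alpha>\tfrac{2\gamma-3}{\gamma-2}$, $\alpha>\tfrac{3m-2}{m-2}$, $m>2$, $\gamma>2$ are consumed (the same inequalities that already made the Bogovskii constant in Theorem \ref{t_DFL} $\e$-uniform and made the density estimate in Lemma \ref{l_est_dens} work). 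A secondary technical point is that $\vcg{\phi}_\e$ is genuinely $C^1_c(\Om_\e)$ and divergence-free so that it is a legitimate test function in \eqref{w_mom} with no leftover divergence term; this is guaranteed by the Bogovskii correction being exactly tuned to cancel $\Div(g_{n,\e}\vcg{\varphi})$ on each annulus. Once \eqref{eq-monmentum}–\eqref{est-re} are in hand, the actual limit passage $\e\to0$ is immediate since the right-hand side tends to $0$ in $\mathcal{D}'(\Om)$.
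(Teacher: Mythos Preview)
Your overall plan---localize a general test function $\vcg{\varphi}\in C^\infty_c(\Om;\R^3)$ to $\Om_\e$ via cut-offs near the holes, plug the localized version into \eqref{w_mom}, and declare the discrepancy to be ${\bf r}_\e$---is exactly the right one. However, two features of your construction diverge from what the paper does, and one of them would actually prevent the argument from closing.

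First, the Bogovski\u{\i} correction is entirely unnecessary here. The weak momentum formulation \eqref{w_mom} admits \emph{arbitrary} test functions in $C^1_c(\Om_\e;\R^3)$; there is no divergence constraint to satisfy. You seem to be importing the construction from \cite{DFL} of the operator in Theorem~\ref{t_DFL}, where the divergence must be prescribed---but that construction serves a different purpose (building $\mathcal{B}_\e$), not the passage in the momentum equation. In the paper one simply takes $g_\e\vcg{\varphi}$ as the test function, with $g_\e$ a scalar cut-off, and the pressure term $\int p(\vr_\e,\vte)\,\dive(g_\e\vcg{\varphi})$ is estimated directly along with the other three.

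Second, and more seriously, your choice of scale for the cut-off would not yield smallness. You take $g_{n,\e}\equiv 1$ on $B_{2\delta_0\e^\alpha}(x_{n,\e})$ and supported in $B_{\delta_1\e}(x_{n,\e})$, so $\|\nabla g_{n,\e}\|_{L^\infty}\le C\e^{-1}$; but then the support of $1-\sum_n g_{n,\e}$ (equivalently, the set where the corrector lives) has measure $\sim N(\e)\,\e^{3}\sim 1$, and $\|\nabla g_\e\|_{L^r}$ carries no positive power of $\e$ after summing over $N(\e)\sim\e^{-3}$ holes. The paper instead cuts off at the \emph{hole} scale: $g_\e=0$ on $\bigcup_n T_{n,\e}$, $g_\e=1$ outside $\bigcup_n B_{2\delta_0\e^\alpha}(x_{n,\e})$, so $\|\nabla g_\e\|_{L^\infty}\le C\e^{-\alpha}$ but the support of $1-g_\e$ has measure $\sim\e^{3(\alpha-1)}$. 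This gives $\|1-g_\e\|_{L^r}\le C\e^{3(\alpha-1)/r}$ and $\|\nabla g_\e\|_{L^r}\le C\e^{3(\alpha-1)/r-\alpha}$, and it is precisely the positivity of the exponent $3(\alpha-1)/r-\alpha$ for the relevant $r$ (which the paper verifies separately in the two cases $\Theta=2\gamma-3$ and $\Theta=\gamma\frac{3m-2}{3m+2}$) that produces the $\e^{\delta_1}$ gain. Once you adopt this cut-off, the four remainder integrals $I_{1,\e},\dots,I_{4,\e}$ are bounded by straightforward H\"older inequalities against the uniform bounds \eqref{b7}, with no further correctors needed.
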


\begin{proof} From the assumptions on the holes in \eqref{ass-holes}, we can find a sequence of smooth functions $g_\e\in C^\infty(\Om)$ such that
$$
0\leq g_\e \leq 1, \ \  g_\e =0 \ \mbox{on}\ \bigcup_{n=1}^{N(\e)}  T_{n,\e},\quad g_\e=1 \  \mbox{in} \ \Om \setminus \bigcup_{n=1}^{N(\e)} B_{2\de_0\e^\a}(x_{\e,n}),\quad \|\nabla g_\e\|_{L^\infty(\Om)}\leq C\, \e^{-\alpha}.
$$
Then for each $1\leq r\leq \infty$, there holds
\be\label{def-ge2}
\|1-g_\e\|_{L^r(\Om)}\leq C\, \e^{\frac{3(\alpha-1)}{r}},\quad \|\nabla g_\e\|_{L^r(\Omega)}\leq C\ \e^{\frac{3(\alpha-1)}{r}-\alpha}.
\ee

Let $\vcg{\varphi} \in C_c^\infty (\Om; \R^3)$. Then $\vcg{\varphi} g_\e \in C_c^\infty (\Om_\e; \R^3)$  is a good test function for the momentum equations \eqref{bal_mom} in $\Om_\e$. Direct calculation gives
\ba\label{wk-eq-tvu}
&\intO{ \Big(\vr_\e  (\vu_\e\otimes  \vu_\e):\nabla \vcg{\varphi}  + p( \vr_\e, \vte) \,\dive \vcg{\varphi} - \SSS(\vte, \nabla  \vu_\e):\nabla \vcg{\varphi} +  \vr_\e  \vf \cdot \vcg{\varphi} \Big)  }\\
&\quad =\int_{\Om_\e}\Big( \vr_\e  (\vu_\e\otimes  \vu_\e):\nabla (\vcg{\varphi}  g_\e)+ p( \vr_\e)\, \dive (\vcg{\varphi}  g_\e)-\SSS(\vte, \nabla  \vu_\e):\nabla (\vcg{\varphi}  g_\e) +  \vr_\e \vf \cdot (\vcg{\varphi}  g_\e) \Big)\,\dx + I_\e\\
&\quad =I_\e,\nn
\ea
where $I_\e:=\sum_{j=1}^4 I_{j,\e}$ with:
\begin{align*}
&I_{1,\e}:=  \int_{\Om}   \Big(\vr_\e   (\vu_\e\otimes   \vu_\e):(1-g_\e)\nabla \vcg{\varphi}  -  \vr_\e   (\vu_\e\otimes   \vu_\e):(\nabla g_\e\otimes  \vcg{\varphi} ) \Big)\,\dx,\\
&I_{2,\e}:= \int_{\Om} \Big( p(  \vr_\e, \vte)(1-g_\e)\dive \vcg{\varphi}  - p(\vr_\e,\vte) \nabla g_\e\cdot \vcg{\varphi}\Big) \,\dx,\\
&I_{3,\e}:=  \int_{\Om} \Big(-\SSS(\vte, \nabla   \vu_\e):(1-g_\e)\nabla \vcg{\varphi}  +\SSS(\vte, \nabla \vu_\e):(\nabla g_\e\otimes  \vcg{\varphi})\Big)\,\dx,\\
&I_{4,\e}:= \int_{\Om}    \vr_\e  \vf \cdot (1-g_\e)\vcg{\varphi} \,\dx.
\end{align*}

For $I_{1,\e}$ we estimate
\ba\label{est-Ie1}
|I_{1,\e}| &\leq  C\, \|\vr_\e\|_{L^{\g + \Theta}(\Om)} \| \vu_\e\|_{L^6(\Om)}^2 \big( \|(1-g_\e)\nabla\vcg{\varphi} \|_{L^{\frac{3(\g+\Theta)}{2(\g+\Theta)-3}}(\Om)}+ \|\nabla g_\e\otimes \vcg{\varphi} \|_{L^{\frac{3(\g+\Theta)}{2(\g+\Theta)-3}}(\Om)}\big)\\
&\quad \leq C\, \big(\|1-g_\e\|_{L^{r_1}(\Om)}\|\nabla\vcg{\varphi} \|_{L^{\frac{3(\g+\Theta)}{2(\g+\Theta)-3}+\delta_0}(\Om)} + \|\nabla g_\e\|_{L^{\frac{3(\g+\Theta)}{2(\g+\Theta)-3}+ \de_0}(\Om)} \|\vp\|_{L^{r_1}(\Om)} \big),\nn
\ea
where
\be\label{def-de-r1}
0<\de_0<1, \quad 1<r_1<\infty,\quad \frac{1}{r_1}+ \left(\frac{3(\g+\Theta)}{2(\g+\Theta)-3}+\delta_0\right)^{-1}=\frac{2(\g+\Theta)-3}{3(\g+\Theta)}.
\ee
By virtue of \eqref{def-ge2}, we have
\be\label{est-Ie1-0}
\|1-g_\e\|_{L^{r_1}(\Om)}\leq C\, \e^{\frac{3(\alpha-1)}{r_1}}, \quad \|\nabla g_\e\|_{L^{\frac{3(\g+\Theta)}{2(\g+\Theta)-3}+\de_0}(\Om)}\leq C\, \e^{3(\alpha-1)\left(\frac{3(\g+\Theta)}{2(\g+\Theta)-3}+\delta_0\right)^{-1}-\alpha}.
\ee
By the definition of $\Theta$ in Theorem \ref{t_main} (or in Lemma \ref{l_est_dens}), we will calculate the sign of the power to $\e$ in \eqref{est-Ie1-0} for two cases.

The first case is
$$
\Theta = \min\Big\{2\gamma-3, \gamma \frac{3m-2}{3m+2}\Big\} = 2\gamma-3.
$$
Then
\be\label{est-Ie1-1}
3(\alpha-1)\left(\frac{3(\g+\Theta)}{2(\g+\Theta)-3}\right)^{-1}-\alpha = 3(\alpha-1)\left(\frac{3\g-3}{2\g-3}\right)^{-1}-\alpha = \frac{\a\g-2\a-2\g+3}{\g-1}>0,
\ee
where we used the condition
$$
\alpha> \max\Big\{\frac{2\gamma-3}{\gamma-2}, \frac{3m-2}{m-2}\Big\} \geq \frac{2\gamma-3}{\gamma-2}
$$
 which implies
$$
\a\g-2\a-2\g+3>0.
$$
Then by \eqref{est-Ie1-0} and \eqref{est-Ie1-1}, we can choose $\de_0>0$ small enough such that
\be\label{est-Ie1-2}
3(\alpha-1)\left(\frac{3(\g+\Theta)}{2(\g+\Theta)-3} + \de_{0}\right)^{-1}-\alpha  =:h_1(\de_0)>0.
\ee
We finally obtain in this case
$$
|I_{1,\e}| \leq C\, \e^{\de_1} \big(\|\nabla\vcg{\varphi} \|_{L^{\frac{3\g-3}{2\g-3}+\delta_0}(\Om)} + \|\vcg{\varphi}\|_{L^{r_1}(\Om)}\big),
$$
where
$$
 \de_1:=\min\left\{\frac{3(\alpha-1)}{r_1},h_1(\de_0)\right\}>0
$$
 with $\de_0>0$ is chosen such that \eqref{est-Ie1-2} is satisfied and $1<r_1<\infty$ is determined by \eqref{def-de-r1}.

\medskip

The second case is
\be\label{th-case2}
\Theta = \min\Big\{2\gamma-3, \gamma \frac{3m-2}{3m+2}\Big\} = \gamma \frac{3m-2}{3m+2}.
\ee
Then
\begin{align*}
3(\alpha-1)\left(\frac{3(\g+\Theta)}{2(\g+\Theta)-3}\right)^{-1}-\alpha & = 3(\alpha-1)\frac{4\g m - (3 m + 2)}{6 m \g} - \alpha  \\
& = \frac{\a (2\g m - 3m - 2 ) - 4\g m + 3m +2}{2 m \g }\\
& = : h_2(\a).
\end{align*}
Since $\g>2, m>2$, then
\be\label{h2-1}
h_2'(\a) = \frac{2\g m - 3 m -2}{2 m \g} > \frac{m -2}{2 m \g} >0,
\ee
which means that $h$ is strictly increasing in $\a$. Moreover,
\be\label{h2-2}
h_2\Big(\frac{3m-2}{m-2}\Big) = \frac{(m + 2) \g - (3m +2)}{\g(m-2)}.
\ee
Recalling in this case \eqref{th-case2}, we have
\ba\label{h2-3}
2\gamma-3 \geq \gamma \frac{3m-2}{3m+2}& \Longleftrightarrow 2\gamma -  \gamma \frac{3m-2}{3m+2} \geq 3 
\\
&\Longleftrightarrow  \gamma \frac{m+2}{3m+2} \geq 1 \Longleftrightarrow  \gamma \geq \frac{3m+2}{m+2}.
\ea
By \eqref{h2-2} and \eqref{h2-3}, we obtain in case \eqref{th-case2} that
$$
h_2\Big(\frac{3m-2}{m-2}\Big) \geq 0.
$$
Hence, by \eqref{h2-1} we deduce
$$
h_2(\a) = 3(\alpha-1)\left(\frac{3(\g+\Theta)}{2(\g+\Theta)-3 }\right)^{-1}-\alpha > 0
$$
for all
$$
\alpha> \max\Big\{\frac{2\gamma-3}{\gamma-2}, \frac{3m-2}{m-2}\Big\} \geq \frac{3m-2}{m-2}.
$$
Then we can repeat the argument for the first case and choose $\de_0>0$ small enough such that
\be\label{est-Ie1-2-2}
3(\alpha-1)\Big(\frac{3(\g+\Theta)}{2(\g+\Theta)-3 }+\delta_0\Big)^{-1}-\alpha =:h_{3}(\de_0)>0.
\ee
We finally have
\be\label{est-Ie1-f}
|I_{1,\e}| \leq C\, \e^{\de_1} \big(\|\nabla\vcg{\varphi} \|_{L^{\frac{3(\g+\Theta)}{2(\g+\Theta)-3}+\delta_0}(\Om)} + \|\vcg{\varphi}\|_{L^{r_1}(\Om)}\big),
\ee
where
\be\label{def-de1}
 \de_1:=\min\left\{\frac{3(\alpha-1)}{r_1},h_{3}(\de_0)\right\}>0
\ee
 with $\de_0>0$ is chosen such that \eqref{est-Ie1-2-2} is satisfied and $1<r_1<\infty$ is determined by \eqref{def-de-r1}.

\medskip

The estimates for other $I_{j,\e}$ are similar and the results are the same (or even better). Thus we may write
\ba\label{est-Ie2}
|I_{2,\e}|+|I_{3,\e}| + |I_{4,\e}|  \leq C\, \e^{\de_1} \Big(\|\nabla\vcg{\varphi} \|_{L^{\frac{3(\g+\Theta)}{2(\g+\Theta)-3}+\delta_0}(\Om)} + \|\vcg{\varphi}\|_{L^{r_1}(\Om)}\Big).
\ea

\medskip

Summing up the estimates in \eqref{est-Ie1-f} and \eqref{est-Ie2} implies \eqref{est-re}. This completes the proof of Lemma \ref{lem:momentum}.

\end{proof}

\medskip

By \eqref{conv-1} and Lemma \ref{lem:momentum}, passing $\e\to 0$ in \eqref{eq-monmentum} gives
\be\label{eq-monmentum-limit}
\dive(\vr \vu \otimes \vu)+\nabla \overline{p(\vr, \vt)} - \dive\SSS(\vt,\nabla \vu) =  \vr \vf,\quad \mbox{in}\ \mathcal{D}'(\Om).
 \ee

To complete the proof of Theorem \ref{t_main}, we see from \eqref{w_ener_2} and \eqref{eq-monmentum-limit} that it suffices to show
\be\label{con-left}
\overline{\vr e (\vr,\vt)} = \vr e (\vr,\vt), \quad \overline{p(\vr, \vt)} = p(\vr, \vt).
\ee
Recall the formula of $e$ and $p$ in \eqref{int_e} and \eqref{press}, and recall the strong convergence of $\vte$ in \eqref{conv-1}. Thus, to show \eqref{con-left}, it is sufficient to prove the strong convergence of $\vre$. This is the main purpose of the next subsection.

\subsection{Strong convergence of the density}
\label{sub_scd}

In the theory of weak solutions of compressible Navier--Stokes equations, the strong convergence of the density is the main issue: the density has no uniform derivative estimates. While, this is nowadays well understood and the starting key point is the compactness of the so called effective viscous flux (see \cite{LI4, F-book, N-book}):

\begin{Lemma}\label{lem:flux}
Under the assumptions in Theorem \ref{t_main}, up to a subsequence, there holds for any $\psi\in C_c^\infty(\Omega)$,
\be\label{flux}
\lim_{\e\to 0}\intO{\psi \left(p( \vr_\e, \vte)-\Big(\frac{4\mu(\vte)}{3}+\nu(\vte)\Big)\dive  \vu_\e\right) \vr_\e}=\intO{\psi\left(\overline{p(\vr,\vt)}-\Big(\frac{4\mu(\vt)}{3}+\nu(\vt)\Big)\dive \vu\right)\vr}.
\ee
\end{Lemma}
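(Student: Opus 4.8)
The plan is to test the limit momentum equation \eqref{eq-monmentum-limit} and the approximate momentum equation \eqref{eq-monmentum} with suitable test functions built from the Bogovskii-type operator and the Riesz potential, following the by-now classical scheme of Lions and Feireisl, but carefully tracking the perforation remainder $\mathbf r_\e$. First I would fix $\psi\in C^\infty_c(\Omega)$ and consider the test function $\vcg{\varphi}_\e:=\psi\,\nabla\Delta^{-1}[\mathbf 1_{\Omega}\,\vre]$ for the approximate system and $\vcg{\varphi}:=\psi\,\nabla\Delta^{-1}[\mathbf 1_\Omega\,\vr]$ for the limit system, where $\Delta^{-1}$ is understood via the Newtonian potential on $\R^3$ (so $\nabla\Delta^{-1}$ is a singular integral operator of order $-1$). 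Because $\vre\rightharpoonup\vr$ weakly in $L^{\gamma+\Theta}(\Omega)$ and $\gamma+\Theta>3/2$, standard Sobolev estimates for the Riesz potential give $\vcg{\varphi}_\e\to\vcg{\varphi}$ strongly in $L^s(\Omega;\R^3)$ for a range of $s$ including $s=6$, while $\nabla\vcg{\varphi}_\e$ is bounded in $L^{\gamma+\Theta}$; in particular $\vcg{\varphi}_\e$ is an admissible test function in \eqref{eq-monmentum} and its norms $\|\nabla\vcg{\varphi}_\e\|_{L^{\frac{3(\gamma+\Theta)}{2(\gamma+\Theta)-3}+\delta_0}}+\|\vcg{\varphi}_\e\|_{L^{r_1}}$ are bounded uniformly in $\e$.

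Next I would plug $\vcg{\varphi}_\e$ into \eqref{eq-monmentum} and $\vcg{\varphi}$ into \eqref{eq-monmentum-limit}, and subtract. The term $\langle\mathbf r_\e,\vcg{\varphi}_\e\rangle$ is controlled by \eqref{est-re}: since the test-function norms are uniformly bounded and the prefactor is $\e^{\delta_1}\to 0$, this contribution vanishes in the limit. The remaining terms are handled exactly as in the non-perforated case: using the identity $\nabla\Delta^{-1}\Div=\mathrm{Id}+(\text{double Riesz transforms})$ one rewrites $\int\psi\,p(\vre,\vte)\vre$ and $\int\psi\,\SSS(\vte,\nabla\vue):\nabla\vcg{\varphi}_\e$ in terms that, after passing $\e\to 0$, reconstruct the desired combination; the convective and force terms are lower order. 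The crucial cancellation is the commutator-type identity
$$
\int_\Omega \psi\,\vre\, R_{i}R_{j}[(S_{ij}-\text{pressure part})]\,\dx - \int_\Omega \psi\,\vr\,R_iR_j[\cdots]\,\dx \to 0,
$$
which follows from the div-curl lemma (or from Feireisl's commutator lemma on Riesz transforms) combined with the strong convergence $\vue\to\vu$ in $L^r$, $r<6$, $\vte\to\vt$ in $L^r$, $r<3m$, and the weak convergence of $\vre$; here the continuity of $\mu(\cdot)$ and $\nu(\cdot)$ together with the uniform $L^{3m}$ bound on $\vte$ ensure the viscosity coefficients converge strongly in every $L^r$, $r<\infty$, which is what makes the $\SSS$-term pass to the limit. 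The linearity of $\SSS$ in $\nabla\vu$ and the factorization $\frac{4\mu}{3}+\nu$ arise from applying $R_iR_j$ to $\Div\vue$ and using $\sum_i R_i^2=-\mathrm{Id}$.

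I expect the main obstacle to be twofold. First, one must verify that $\vcg{\varphi}_\e$ is genuinely admissible in \eqref{eq-monmentum}: \eqref{est-re} is stated for $\vcg{\varphi}\in C^\infty_c(\Omega;\R^3)$, whereas $\vcg{\varphi}_\e$ is only as regular as the Riesz potential allows, so a density/approximation argument is needed, together with a check that the integrability exponents in \eqref{est-re} — namely $\frac{3(\gamma+\Theta)}{2(\gamma+\Theta)-3}+\delta_0$ for $\nabla\vcg{\varphi}_\e$ and $r_1$ for $\vcg{\varphi}_\e$ — are exactly the ones delivered by the Calder\'on--Zygmund and Sobolev estimates for $\nabla\Delta^{-1}[\mathbf 1_\Omega\vre]$ with $\vre\in L^{\gamma+\Theta}$; this is presumably why $\delta_0$ and $r_1$ were introduced with that precise form in Lemma~\ref{lem:momentum}. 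Second, there is the usual difficulty of justifying the limit in the nonlinear terms $p(\vre,\vte)\vre$ and $\vre(\vue\otimes\vue)$: one needs $\gamma+\Theta$ large enough that $\vre^{\gamma}$ and $\vre^2$ times the Riesz potential of $\vre$ are equi-integrable — this is where the hypothesis $\gamma>2$ and the explicit value of $\Theta$ in \eqref{Theta} enter — and one needs the div-curl structure to kill the otherwise uncontrolled product of two weakly converging sequences. Everything else (the convective term, the external force term, the boundary-free nature of the test function since $\psi$ has compact support) is routine and I would not belabor it.
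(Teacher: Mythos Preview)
Your proposal is correct and follows essentially the same route as the paper: test \eqref{eq-monmentum} with $\psi\nabla\Delta^{-1}(1_\Omega\vre)$ and \eqref{eq-monmentum-limit} with $\psi\nabla\Delta^{-1}(1_\Omega\vr)$, kill the remainder $\mathbf r_\e$ via \eqref{est-re} (using $\gamma+\Theta>3$ so that the required exponent $\frac{3(\gamma+\Theta)}{2(\gamma+\Theta)-3}+\delta_0\le\gamma+\Theta$), and handle the convective residual by a div-curl/commutator lemma on Riesz transforms together with the continuity equation. The only cosmetic discrepancy is your mention of a ``Bogovskii-type operator'' in the opening sentence, which plays no role here; the argument runs entirely on $\nabla\Delta^{-1}$ and its Calder\'on--Zygmund bounds, exactly as you then describe.
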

\begin{proof}
The proof of Lemma \ref{lem:flux} is quite tedious but nowadays well understood. The main idea is to employ the following test functions:
$$
\psi \nabla \Delta^{-1}(1_{\Om} \vr_\e), \quad  \psi \nabla \Delta^{-1}(1_{\Om}\vr),
$$
where $\psi\in C_c^\infty(\Om)$ and $\Delta^{-1}$ is the Fourier multiplier on $ \R^3$ with symbol $-{|\xi|^{-2}}$.  We refer to Section 1.3.7.2 in \cite{N-book} or Section 10.16 in \cite{F-N-book} for more on Fourier multipliers and Riesz operators used here. We observe that
$$
(\nabla \otimes \nabla) \Delta^{-1}=\left(\mathcal{R}_{i,j}\right)_{1\leq i,j\leq 3}
$$
are the classical Riesz operators (sometimes also called double Riesz operator). Then for any $f\in L^r( \R^3),~1<r<\infty$:
\be\label{est-test-flux1}
\|(\nabla\otimes \nabla) \Delta^{-1}(f)\|_{L^r( \R^3)}\leq C(r)\, \|f\|_{L^r( \R^3)}.\nn
\ee
By the embedding theorem in homogeneous Sobolev spaces (see Theorem 1.55 and Theorem 1.57 in \cite{N-book} or Theorem 10.25 and Theorem 10.26 in \cite{F-N-book}), we have for any $f\in L^r( \R^3), \ {\supp}\, f\subset \Om$:
\ba\label{est-test-flux2}
&\|\nabla \Delta^{-1}(f)\|_{L^{r^*}( \R^3)}\leq C\, \| f \|_{L^r( \R^3)}\quad \frac{1}{r^*}=\frac{1}{r}-\frac{1}{3}, \ \mbox{if $1<r<3$},\\
&\|\nabla \Delta^{-1}(f)\|_{L^{r^*}( \R^3)}\leq C\, \| f \|_{L^r( \R^3)}\quad \mbox{for any} \ r^*<\infty,  \mbox{if $r\geq3$}.\nn
\ea
Recall the fact that
\be\label{g+t>3-1}
\gamma + \Theta  \geq \gamma + (2\g - 3) = 3\g -3 >3,
\ee
where we used the definition of $\Theta$ in \eqref{Theta}. Then by the uniform estimate for $ \vr_\e$ and $\vr$ in \eqref{b7} and \eqref{conv-1},  we have for any $1\leq r<\infty$:
\ba\label{est-test-flux3}
&\|\psi \nabla \Delta^{-1}(1_{\Om} \vr_\e)\|_{L^r(\Om)} + \|\psi \nabla \Delta^{-1}(1_{\Om}\vr)\|_{L^r(\Om)}\leq C,\\
&\|\nabla \left(\psi \nabla \Delta^{-1}(1_{\Om} \vr_\e)\right)\|_{L^{\g + \Theta}(\Om)} + \|\nabla \left(\psi \nabla \Delta^{-1}(1_{\Om}\vr)\right)\|_{L^{\g+ \Theta}(\Om)}\leq C.
\ea

Again by \eqref{g+t>3-1}, we have
$$
\frac{3(\g+\Theta)}{2(\g+\Theta)-3} < \frac{3(\g+\Theta)}{2(\g+\Theta)-(\g + \Theta)} = 3 < \g + \Theta.
$$
Thus, we can choose $\de_0>0$ in Lemma \ref{lem:momentum} small such that
$$
\g + \Theta \geq \frac{3(\g+\Theta)}{2(\g+\Theta)-3} + \de_0.
$$
Hence, by \eqref{est-re} and \eqref{est-test-flux3}, we have
\ba\label{est-test-flux4}
&|\langle {\bf r}_\e,\psi \nabla \Delta^{-1}(1_{\Om} \vr_\e)\rangle_{\mathcal{D}'(\Om),\mathcal{D}(\Om)}|\\
&\leq C\, \e^{\de_1} \left(\|\nabla \left(\psi \nabla \Delta^{-1}(1_{\Om} \vr_\e)\right)\|_{L^{3\g-3}(\Om_\e)} + \|\psi \nabla \Delta^{-1}(1_{\Om} \vr_\e)\|_{L^{r_1}(\Om)}\right)\\
&\leq C\,\e^{\de_1},\nn
\ea
which goes to zero as $\e\to 0$ due to $\de_1>0$.

\smallskip

Now we choose $\psi \nabla \Delta^{-1}(1_{\Omega} \vr_\e)$ as a test functions in the weak formulation of equation \eqref{eq-monmentum} and pass $\e\to 0$. Then we choose $\psi \nabla \Delta^{-1}(1_{\Omega}\vr)$ as a test functions in the weak formulation of \eqref{eq-monmentum-limit}. By comparing the results of theses two operations, by using the convergence results in \eqref{conv-1}, through long but straightforward calculations, we obtain that
\ba\label{flux-l1}
I:&=\lim_{\e\to 0}\intO{\psi \left(p( \vr_\e,\vte)-\Big(\frac{4\mu(\vte)}{3}+\nu(\vte)\Big)\Div  \vu_\e\right) \vr_\e}-\intO{\psi\left(\overline{p(\vr,\vt)}-\Big(\frac{4\mu(\vt)}{3}+\nu(\vt)\Big)\Div \vu\right)\vr}\\
&=\lim_{\e\to 0}\intO{ \vr_\e u_\e^i u_\e^j \psi \mathcal{R}_{i,j}(1_{\Omega}  \vr_\e)}-\intO{\vr u^i u^j \psi \mathcal{R}_{i,j}(1_{\Omega} \vr)}.
\ea

On the other hand, choosing $1_{\Omega} \nabla \Delta^{-1}(\psi \vr_\e  \vu_\e)$ as a test function in the weak formulation \eqref{w_cont_ren} with $b(\vr)=\vr$ and  $1_{\Omega} \nabla \Delta^{-1}(\psi\vr \vu)$ as a test function in the weak formulation of \eqref{con-con-3} implies
\be\label{flux-l2}
\intO{1_{\Omega} \vr_\e  u_\e^i  \mathcal{R}_{i,j}(\psi \vr_\e  \vu_\e) }=0,\quad \intO{1_{\Omega}\vr u^i  \mathcal{R}_{i,j}(\psi \vr \vu)}=0.
\ee

Plugging (\ref{flux-l2}) into (\ref{flux-l1}) yields
\ba\label{flux-l3}
&I=\lim_{\e\to 0}\intO{ u_\e^i\Big( \vr_\e u_\e^j \psi \mathcal{R}_{i,j}(1_{\Omega}  \vr_\e)-1_{\Omega} \vr_\e   \mathcal{R}_{i,j}(\psi \vr_\e  \vu_\e)\Big)}\\
&\qquad\qquad-\intO{ u^i\Big(\vr u^j \psi \mathcal{R}_{i,j}(1_{\Omega} \vr)-1_{\Omega}\vr  \mathcal{R}_{i,j}(\psi \vr \vu)\Big)}.\nn
\ea

We introduce the following lemma, which is a variant of the Div-Curl lemma. We refer to \cite[Lemma 3.4]{FNP} for its proof.
\begin{Lemma}\label{lem-ct}Let $1<p,q<\infty$ satisfy $$\frac{1}{r}:=\frac{1}{p}+\frac{1}{q}<1.$$
Suppose
\[
u_\e \to u \quad\mbox{weakly in}\quad L^p( \R^3),\quad v_\e \to v \quad\mbox{weakly in}\quad L^q( \R^3),\ \mbox{as $\e \to 0$}.
\]
Then for any  $1\leq i,j\leq 3$:
\[
u_\e \mathcal{R}_{i,j}(v_\e)-v_\e \mathcal{R}_{i,j}(u_\e) \to u \mathcal{R}_{i,j}(v)-v \mathcal{R}_{i,j}(u) \quad\mbox{weakly in}\quad L^r( \R^3).
\]
\end{Lemma}
Now, by the strong convergence of the velocity in \eqref{conv-1} and Lemma \ref{lem-ct}, our desired result \eqref{flux} follows immediately.
\end{proof}

\medskip

We rewrite \eqref{flux} into the form
$$
\intO{\psi \left(\overline{\varrho^{\gamma+1}} + \overline{\varrho^2}\vartheta-\Big(\frac{4\mu(\vte)}{3}+\nu(\vte)\Big)\overline{\varrho\dive \vu}\right)}=\intO{\psi\left(\overline{\varrho^\gamma} + \varrho \vartheta-\Big(\frac{4\mu(\vt)}{3}+\nu(\vt)\Big)\dive \vu\right)\vr}.
$$
Recall that all terms are integrable in higher power than $1$, therefore the limits exist.
This implies that 
$$
\overline{\varrho^{\gamma+1}} + \overline{\varrho^2}\vartheta-\Big(\frac{4\mu(\vt)}{3}+\nu(\vt)\Big)\overline{\varrho\dive \vu} = \varrho \overline{\varrho^\gamma} + \varrho^2 \vartheta-\Big(\frac{4\mu(\vt)}{3}+\nu(\vt)\Big)\vr \dive \vu
$$
a.e. in $\Omega$ and also
\begin{equation} \label{5.33b}
\frac{\overline{\varrho^{\gamma+1}} + \overline{\varrho^2}\vartheta}{\frac{4\mu(\vt)}{3}+\nu(\vt)}-\overline{\varrho\dive \vu} = \frac{\varrho \overline{\varrho^\gamma} + \varrho^2 \vartheta}{\frac{4\mu(\vt)}{3}+\nu(\vt)}-\vr \dive \vu
\end{equation}
a.e. in $\Omega$. Note that due to our assumptions on the viscosity coefficients, all terms are integrable over $\Omega$.

\medskip

Before formulating the last lemma, we recall one standard result (for the proof see \cite[Theorem 10.19]{F-N-book})
\begin{Lemma} \label{weak_mon}
Let $(P,G) \in C(\R) \times C(\R)$ be a
couple of non-decreasing functions. Assume that $\vr_n\in
L^1(\Omega)$ is a sequence such that
$$
\left.\begin{array}{c}
P(\vr_n) \rightharpoonup \overline{P(\vr)}, \\
G(\vr_n) \rightharpoonup \overline{G(\vr)}, \\
P(\vr_n)G(\vr_n) \rightharpoonup \overline{P(\vr)G(\vr)}
\end{array} \right\} \mbox{ in } L^1(\Omega).
$$
\begin{description}
\item[i)]
Then
$$
\overline{P(\vr)}\, \, \overline{G(\vr)} \leq
\overline{P(\vr)G(\vr)}
$$
a.e. in $\Omega$.
\item[ii)] If, in addition,
$$
G(z)=z,\quad P \in C(\R), P \mbox{ non-decreasing }
$$
and
$$
\overline{P(\vr)}\, \, \vr = \overline{P(\vr)\vr}
$$
(where we have denoted by $\vr=\overline{G(\vr)}$), then
$$
\overline{P(\vr)} = P(\vr).
$$
\end{description}
\end{Lemma}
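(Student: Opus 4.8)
The plan is to derive both parts from the theory of Young measures, after which everything reduces to the elementary fact that two comonotone quantities have non-negative covariance. First I would set $u_n:=P(\vr_n)$, $v_n:=G(\vr_n)$ and note that, since $u_n\rightharpoonup\overline{P(\vr)}$, $v_n\rightharpoonup\overline{G(\vr)}$ and $u_nv_n=P(\vr_n)G(\vr_n)\rightharpoonup\overline{P(\vr)G(\vr)}$ in $L^1(\Omega)$, the Dunford--Pettis criterion makes the families $\{u_n\}$, $\{v_n\}$, $\{u_nv_n\}$ uniformly integrable; in particular $\{u_n\}$ and $\{v_n\}$ are tight, hence so is the $\R^2$-valued family $\{(u_n,v_n)\}$, which therefore generates (along a subsequence) a Young measure $\{\lambda_x\}_{x\in\Omega}$ of probability measures on $\R^2$. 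By the fundamental theorem on Young measures (see, e.g., \cite{F-N-book}), the uniform integrability of $\{u_n\}$, $\{v_n\}$, $\{u_nv_n\}$ gives, for a.e.\ $x$,
\begin{equation*}
\overline{P(\vr)}(x)=\int_{\R^2}p\,{\rm d}\lambda_x,\qquad\overline{G(\vr)}(x)=\int_{\R^2}g\,{\rm d}\lambda_x,\qquad\overline{P(\vr)G(\vr)}(x)=\int_{\R^2}pg\,{\rm d}\lambda_x ;
\end{equation*}
since the weak limits are uniquely determined these identities pass to the whole sequence, and the assertions i) and ii) concern only these limits.

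For part i) the key observation is that $(u_n(x),v_n(x))=(P(\vr_n(x)),G(\vr_n(x)))$ always lies on $\Gamma:=\{(P(s),G(s)):s\in\R\}$, so $\lambda_x$ is carried by $\overline{\Gamma}$; and since $P$, $G$ are non-decreasing, any two points $(p_1,g_1),(p_2,g_2)\in\overline{\Gamma}$ satisfy $(p_1-p_2)(g_1-g_2)\ge0$. Integrating this against $\lambda_x\otimes\lambda_x$ I would obtain
\begin{equation*}
\overline{P(\vr)G(\vr)}(x)-\overline{P(\vr)}(x)\,\overline{G(\vr)}(x)=\tfrac12\int_{\R^2}\!\int_{\R^2}(p_1-p_2)(g_1-g_2)\,{\rm d}\lambda_x(p_1,g_1)\,{\rm d}\lambda_x(p_2,g_2)\ \ge\ 0,
\end{equation*}
which is exactly the asserted inequality a.e.\ in $\Omega$.

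For part ii), where $G={\rm id}$, one has $v_n=\vr_n\rightharpoonup\vr$ in $L^1(\Omega)$ and $\Gamma=\{(P(s),s):s\in\R\}$ is the graph of $P$, so $\lambda_x$ is the push-forward of a probability measure $\nu_x$ on $\R$ (its $s$-marginal) under $s\mapsto(P(s),s)$, with $\vr(x)=\int_{\R}s\,{\rm d}\nu_x$, $\overline{P(\vr)}(x)=\int_{\R}P(s)\,{\rm d}\nu_x$ and $\overline{P(\vr)\vr}(x)=\int_{\R}sP(s)\,{\rm d}\nu_x$. At a.e.\ $x$ where the hypothesis $\overline{P(\vr)}(x)\,\vr(x)=\overline{P(\vr)\vr}(x)$ holds I would use $\int_{\R}(s-\vr(x))\,{\rm d}\nu_x=0$ to rewrite it as
\begin{equation*}
0=\int_{\R}\bigl(s-\vr(x)\bigr)P(s)\,{\rm d}\nu_x(s)=\int_{\R}\bigl(s-\vr(x)\bigr)\bigl(P(s)-P(\vr(x))\bigr)\,{\rm d}\nu_x(s);
\end{equation*}
monotonicity of $P$ makes the integrand non-negative, so it vanishes $\nu_x$-a.e., whence for $\nu_x$-a.e.\ $s$ one has $s=\vr(x)$ or $P(s)=P(\vr(x))$, and in either case $P(s)=P(\vr(x))$. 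Therefore $\overline{P(\vr)}(x)=\int_{\R}P(s)\,{\rm d}\nu_x=P(\vr(x))$, and since $x$ ranges over a set of full measure, $\overline{P(\vr)}=P(\vr)$ a.e.\ in $\Omega$.

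The main obstacle is concentrated in the set-up step: legitimising the Young-measure representation of the \emph{product} $\overline{P(\vr)G(\vr)}$ from weak $L^1$ compactness only --- which is precisely why the hypothesis carries the separate weak convergence of $P(\vr_n)G(\vr_n)$, so that Dunford--Pettis supplies the uniform integrability needed to apply the fundamental theorem on Young measures to the super-linearly growing integrand $(p,g)\mapsto pg$. Once that representation is in hand, the two one-line comonotonicity computations above finish the proof; an alternative route would replace the Young measures by a Minty-type monotonicity argument, but the present formulation makes the sign of the defect $\overline{P(\vr)G(\vr)}-\overline{P(\vr)}\,\overline{G(\vr)}$ transparent.
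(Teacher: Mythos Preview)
Your proof is correct. The paper does not actually prove this lemma but merely cites it as \cite[Theorem~10.19]{F-N-book}; your Young-measure argument, reducing both assertions to the non-negative covariance of comonotone variables under the parametrized measure, is precisely the standard route to that result and is essentially what appears in the cited reference.
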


\medskip

We now have 
\begin{Lemma}\label{lem:flux2} 
It holds $\overline {\vr^{\g+1}}=\overline {\vr^\g} \vr$ a.e. in $\Omega$. Whence $\varrho_\ep \to \varrho$ strongly in $L^1(\Omega)$ and thus also in $L^r(\Omega)$, $1\leq r<\gamma + \Theta$. 
\end{Lemma}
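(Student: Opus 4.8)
The plan is to obtain the pointwise identity $\overline{\vr^{\gamma+1}}=\overline{\vr^{\gamma}}\,\vr$ from the effective viscous flux identity \eqref{5.33b} by combining three ingredients: the vanishing of the integrals over $\Omega$ of $\vr\,\Div\vu$ and of its weak limit $\overline{\vr\,\Div\vu}$; the definite sign of the nonlinear defects $\overline{\vr^{\gamma+1}}-\vr\,\overline{\vr^{\gamma}}$ and $\overline{\vr^{2}}-\vr^{2}$ granted by Lemma \ref{weak_mon}(i); and the a.e.\ positivity of $\vt$ from Lemma \ref{temp_pos}. Strong convergence is then a standard consequence.

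First I would record the zero-average property by renormalizing the continuity equation with $b(s)=s\log s$. Since $\Theta>1$ (so $\gamma+\Theta>\gamma+1>3$), this $b$ fulfils \eqref{b-pt1}--\eqref{b-pt2} with $\beta=\gamma+\Theta$: near $s=0$ one has $|b'(s)|\le C_{\lambda}s^{-\lambda}$ for every $\lambda>0$, and $b'(s)=\log s+1$ grows slower than any positive power of $s$, so the constraint $\lambda_{1}\le\tfrac{\gamma+\Theta}{2}-1$ is met. Hence by Remark \ref{rem:ren} (for the sequence, using $\Div(\vre\vue)=0$ and the uniform bounds \eqref{b7}) and by \eqref{con-con-3} (for the limit), both
$$
\Div\!\big(\vre\log\vre\,\vue\big)+\vre\,\Div\vue=0,\qquad \Div\!\big(\vr\log\vr\,\vu\big)+\vr\,\Div\vu=0
$$
hold in $\mathcal D'(\R^{3})$. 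Testing each against a cut-off equal to $1$ on $\overline{\Om}$ annihilates the convective term (all the fields are supported in $\overline{\Om}$) and leaves $\intO{\vre\,\Div\vue}=0$ and $\intO{\vr\,\Div\vu}=0$; since $\vre\,\Div\vue$ is bounded in some $L^{t}(\Om)$ with $t>1$, it converges weakly along a subsequence to the object denoted $\overline{\vr\,\Div\vu}$ in \eqref{5.33b}, which therefore also has zero integral over $\Om$. (If one wishes to avoid invoking Remark \ref{rem:ren} for the sequence, one may instead use the truncations $b_{k}\in C^{1}$, $b_{k}'\in C_{0}$, agreeing with $s\log s$ for $s\le k$, which are directly admissible in \eqref{w_cont_ren}, and pass to the limit $k\to\infty$.)

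Next I would extract the sign. Rearranging \eqref{5.33b} and multiplying by the positive factor $\tfrac{4\mu(\vt)}{3}+\nu(\vt)$ (positive everywhere by \eqref{visc}),
$$
\Big(\tfrac{4\mu(\vt)}{3}+\nu(\vt)\Big)\big(\overline{\vr\,\Div\vu}-\vr\,\Div\vu\big)=\big(\overline{\vr^{\gamma+1}}-\vr\,\overline{\vr^{\gamma}}\big)+\big(\overline{\vr^{2}}-\vr^{2}\big)\vt\qquad\text{a.e. in }\Om.
$$
Lemma \ref{weak_mon}(i), applied with $(P,G)=(z^{\gamma},z)$ and with $(P,G)=(z,z)$ — the required weak $L^{1}$ convergences holding because $\Theta>1$ makes $\vre^{\gamma+1}$ bounded in $L^{(\gamma+\Theta)/(\gamma+1)}$ and $\vre^{2}$ in $L^{(\gamma+\Theta)/2}$ with both exponents above $1$ — yields $\overline{\vr^{\gamma+1}}\ge\vr\,\overline{\vr^{\gamma}}$ and $\overline{\vr^{2}}\ge\vr^{2}$ a.e. Since $\vt>0$ a.e.\ by Lemma \ref{temp_pos}, the right-hand side above is nonnegative a.e.; integrating over $\Om$ and using the previous step forces it, hence each of its two nonnegative summands, to vanish a.e.\ in $\Om$. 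In particular $\overline{\vr^{\gamma+1}}=\vr\,\overline{\vr^{\gamma}}$ a.e., which is the asserted identity, and also $\overline{\vr^{2}}=\vr^{2}$ a.e.

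Finally, for the strong convergence I would feed $\overline{\vr^{\gamma+1}}=\vr\,\overline{\vr^{\gamma}}$ into Lemma \ref{weak_mon}(ii) with $(P,G)=(z^{\gamma},z)$ to get $\overline{\vr^{\gamma}}=\vr^{\gamma}$, whence $\intO{\vre^{\gamma}}\to\intO{\vr^{\gamma}}$; combined with $\vre\rightharpoonup\vr$ weakly in $L^{\gamma}(\Om)$ and the uniform convexity of $L^{\gamma}$ this gives $\vre\to\vr$ strongly in $L^{\gamma}(\Om)$ (equivalently, one may use $\overline{\vr^{2}}=\vr^{2}$ and $\|\vre-\vr\|_{L^{2}(\Om)}^{2}=\|\vre\|_{L^{2}}^{2}-2(\vre,\vr)+\|\vr\|_{L^{2}}^{2}\to\intO{(\overline{\vr^{2}}-\vr^{2})}=0$). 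In either case $\vre\to\vr$ in $L^{1}(\Om)$, and interpolating with the uniform bound $\|\vre\|_{L^{\gamma+\Theta}(\Om)}\le C$ upgrades this to $L^{r}(\Om)$ for every $1\le r<\gamma+\Theta$. The genuinely delicate input — compactness of the effective viscous flux, Lemma \ref{lem:flux} — is already in hand, so the remaining obstacles are only the admissibility of the unbounded renormalizer $s\log s$ (which is exactly where $\Theta>1$, hence $\gamma+\Theta>\gamma+1$, is used) and the harmless observation that the temperature-dependent viscosities $\mu(\vte),\nu(\vte)$, being strongly convergent, factor cleanly out of all the weak limits above.
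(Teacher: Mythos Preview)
Your argument is essentially the paper's, and almost everything is in order, but there is one genuine gap at the crucial step. After multiplying \eqref{5.33b} by the viscous factor you obtain
\[
\Big(\tfrac{4\mu(\vt)}{3}+\nu(\vt)\Big)\big(\overline{\vr\,\Div\vu}-\vr\,\Div\vu\big)
=\big(\overline{\vr^{\gamma+1}}-\vr\,\overline{\vr^{\gamma}}\big)+\big(\overline{\vr^{2}}-\vr^{2}\big)\vt
\]
a.e.\ in $\Om$, and you then integrate and claim the left-hand side vanishes ``by the previous step''. But the previous step only gives $\intO{\overline{\vr\,\Div\vu}}=0=\intO{\vr\,\Div\vu}$; it says nothing about the \emph{weighted} integrals $\intO{\big(\tfrac{4\mu(\vt)}{3}+\nu(\vt)\big)\overline{\vr\,\Div\vu}}$ and $\intO{\big(\tfrac{4\mu(\vt)}{3}+\nu(\vt)\big)\vr\,\Div\vu}$, and the viscosities are nonconstant functions of $\vt$. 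So as written the left-hand side is not known to vanish and the conclusion does not follow.

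The fix is immediate: do \emph{not} multiply by $\tfrac{4\mu(\vt)}{3}+\nu(\vt)$. Integrate \eqref{5.33b} as it stands. Then the divergence terms appear unweighted and your zero-average identities apply directly, yielding
\[
\intO{\frac{\big(\overline{\vr^{\gamma+1}}-\vr\,\overline{\vr^{\gamma}}\big)+\big(\overline{\vr^{2}}-\vr^{2}\big)\vt}{\tfrac{4\mu(\vt)}{3}+\nu(\vt)}}=0.
\]
The integrand is nonnegative (by Lemma \ref{weak_mon}(i), $\vt>0$ a.e., and positivity of the denominator from \eqref{visc}), hence vanishes a.e.; clearing the positive denominator and using $\vt>0$ then gives $\overline{\vr^{\gamma+1}}=\vr\,\overline{\vr^{\gamma}}$ (and $\overline{\vr^{2}}=\vr^{2}$) a.e.\ exactly as you want. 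This is precisely what the paper does. The rest of your proof---admissibility of $b(s)=s\log s$, the application of Lemma \ref{weak_mon}(ii), and the upgrade to strong $L^{r}$ convergence---is correct.
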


\begin{proof}
We follow the approach from \cite{NoPo_JDE}, the second last limit passage $\ep\to 0$ from Section 4. First, using Remark \ref{rem:ren}, we apply the renormalized continuity equation for the limit continuity equation with the function $b(\varrho) = \varrho \log \varrho$ and the test function identically equal one in $\Omega$. This leads to
$$
\int_\Omega \varrho \dive \vu \dx = 0.
$$
Similarly, using the same for the problem for $\ep >0$ and then passing $\ep \to 0$ gives
$$
\int_\Omega \overline{\varrho \dive \vu} \dx = 0.
$$
Therefore we may integrate \eqref{5.33b} over $\Omega$ to get
$$
\int_\Omega \frac{\overline{\varrho^{\gamma+1}} + \overline{\varrho^2}\vartheta}{\frac{4\mu(\vt)}{3}+\nu(\vt)} \dx  = \int_\Omega \frac{\varrho \overline{\varrho^\gamma} + \varrho^2 \vartheta}{\frac{4\mu(\vt)}{3}+\nu(\vt)} \dx.
$$
We now apply Lemma \ref{weak_mon} and see that
$$
\varrho^2 \leq \overline{\varrho^2} \qquad \text{and} \qquad \varrho \overline{\varrho^\gamma}  \leq \overline{\varrho^{\gamma+1}}
$$
a.e. in $\Omega$. Since $\vartheta >0$ a.e. in $\Omega$ (see Lemma \ref{temp_pos}), we conclude that $\overline{\varrho^{\gamma+1}} = \varrho \overline{\varrho^\gamma}$ a.e. in $\Omega$ which implies that
$$
\overline{\varrho^{\gamma}} =  \varrho^\gamma  \quad \text{a.e. in } \Omega,
$$
again by Lemma \ref{weak_mon}. Therefore, up to the choice of a subsequence, $\varrho_\ep \to \varrho$ in $L^\gamma(\Omega)$, thus also a.e. in $\Omega$ and in $L^r(\Omega)$, $1\leq r < \gamma +\Theta$. This finishes the proof of Theorem \ref{t_main}.
\end{proof}


\end{document}